\def\norm#1{\|#1\|}
\def\wh#1{\widehat{#1}}
\newcommand{\F}{\mathcal{F}}
\newcommand{\cS}{\mathcal{S}}
\newcommand{\T}{\mathcal{T}}
\newcommand{\K}{\mathcal{K}}
\newcommand{\C}{\mathbb{C}}
\newcommand{\R}{\mathbb{R}}
\newcommand{\Z}{\mathbb{Z}}
\newcommand{\be}{\beta}
\newcommand{\e}{\varepsilon}
\newcommand{\la}{\lambda}
\newcommand{\re}{\mathop{\mathrm{Re}}}
\newcommand{\supp}{\operatorname{supp}}
\newcommand{\Del}[1]{}
\newcommand{\finv}[1]{\mathcal F^{-1}\big({#1}\big)}
\numberwithin{equation}{section}
\newtheorem{thm}{Theorem}[section]
\newtheorem{cor}[thm]{Corollary}
\newtheorem{lem}[thm]{Lemma}
\newtheorem{prop}[thm]{Proposition}
\theoremstyle{remark}
\newtheorem{rem}{Remark}
\begin{document}

\title[Uniform Sobolev inequalities]{Uniform Sobolev inequalities for second order non-elliptic  differential operators}

\author[E. Jeong]{Eunhee Jeong}
\author[Y. Kwon]{Yehyun Kwon}
\author[S. Lee]{Sanghyuk Lee}

\address{Department of Mathematical Sciences, Seoul National University, Seoul 151-747, Republic of Korea}
\email{moonshine10@snu.ac.kr}
\email{kwonyh27@snu.ac.kr}
\email{shklee@snu.ac.kr}

%\thanks{This work was supported by NRF of Korea (grant no. 2015R1A2A2A05000956). }

\subjclass[2010]{35B45, 42B15} \keywords{Sobolev inequality, uniform
estimate, non-elliptic}

\begin{abstract}
We study  uniform Sobolev inequalities for the  second order
differential operators $P(D)$ of non-elliptic type. For $d\ge3$ we
prove that the Sobolev type estimate $\|u\|_{L^q(\mathbb{R}^d)}\le C
\|P(D)u\|_{L^p(\mathbb{R}^d)}$ holds with $C$ independent of   the
first order and the constant  terms of $P(D)$ if and only if
$1/p-1/q=2/d$ and $\frac{2d(d-1)}{d^2+2d-4}<p<\frac{2(d-1)}d$. We
also obtain restricted weak type endpoint estimates for the critical
$(p,q)=(\frac{2(d-1)}{d},\frac{2d(d-1)}{(d-2)^2})$,
$(\frac{2d(d-1)}{d^2+2d-4}, \frac{2(d-1)}{d-2})$. As a consequence,
the result extends  the class of functions  for which the unique
continuation for the inequality $|P(D)u|\le|Vu|$ holds.
\end{abstract}

\maketitle

\section{Introduction}
Let $Q$ be a non-degenerate real quadratic form defined on $\mathbb{R}^d$, $d\ge 3$, which is given by
\begin{equation}\label{quad}
Q(\xi)=-\xi_1^2-\cdots -\xi_k^2+\xi_{k+1}^2 +\cdots +\xi_d^2 ,
\end{equation}
where $1\le k\le d$. We consider the constant coefficient second order differential operator
\begin{equation*}
P(D)=Q(D)+\sum_{j=1}^{d}a_jD_j +b,
\end{equation*}
where $D=(D_1,\cdots ,D_d)$, $D_j=\frac{1}{2\pi i}\frac{\partial}{\partial x_j}$ and  $a_1,\cdots,a_d$, $b$ are complex numbers.
We call $P$ `elliptic' if $k=d$ and `non-elliptic' otherwise.

The Sobolev type estimate
\begin{equation}\label{Sob}
\|u\|_{L^q(\mathbb{R}^d)} \le C\|P(D)u\|_{L^p(\mathbb{R}^d)}
\end{equation}
which holds for  $u\in W^{2,p}(\mathbb{R}^d)$
has been of interest in connection to studies of partial differential equations. Here the function space $W^{2,p}(\mathbb{R}^d)$ denotes the second order $L^p$-Sobolev space. If $P(D)=\frac{1}{4\pi^2}\Delta$, \eqref{Sob} is a particular case of  the classical Hardy-Littlewood-Sobolev inequality. When $P(D)$ is non-elliptic, \eqref{Sob}
is closely related to the inhomogeneous Strichartz estimates (\cite{KT, fos, vil, ls, Ta}) for the dispersive equations such as the wave and the Klein-Gordon equations {(see \cite{St, msw, Na-Oz}). For these equations,  estimates \eqref{Sob} were first shown by Strichartz \cite{St} for some $p,q$.

On the other hand, related to a type of  Carleman estimate (e.g. see \eqref{cman}) which is used in the study of unique continuation,
the estimate \eqref{Sob} with $C$ independent of the first and zero order parts of $P(D)$ has been studied.
For such an estimate to hold, by scaling  it is necessary  that   the condition
\begin{equation}\label{scaling}
\frac1p-\frac1q=\frac2d
\end{equation}
holds.  For the elliptic $P(D)$, Kenig, Ruiz, and Sogge \cite{KRS}
characterized the optimal range of the Lebesgue exponents $p$ and
$q$ for which the uniform Sobolev inequality \eqref{Sob} holds. More
precisely, they showed  that the uniform estimates \eqref{Sob} are
true if and only if $1/p-1/q=2/d$ and
$2d/(d+3)<p<2d/(d+1)$\footnote{For those pairs of $p,q$, $(1/p,1/q)$
is in the open line segment $AA'$ in Figure \ref{fig}.}. For
non-elliptic $P(D)$,  it was shown (\cite[Theorem 2.1]{KRS}) that
the uniform Sobolev inequality \eqref{Sob} is true provided
$1/p+1/q=1$ and $1/p-1/q=2/d$, i.e., $(p,q)=(2d/(d+2),2d/(d-2))$
(the point $F$ in Figure \ref{fig}).

However it seems natural to expect that  the uniform bounds \eqref{Sob} continue to hold for $(p, q)$ other than $(2d/(d+2),2d/(d-2))$.
No such estimate seems to be established before (see {\it Remark} \ref{remark1} below Theorem \ref{uniSob1}).  A computation shows that
 in addition to \eqref{scaling}  the condition
 \begin{equation}\label{cond2}
{p} <\frac{2(d-1)}{d}\, ,\quad
\frac{2(d-1)}{d-2}<q\,
\end{equation}
should be satisfied.
(See Section \ref{nec-sec}.)

In this paper  we consider the uniform estimate \eqref{Sob} for non-elliptic $P(D)$ ($1\le k\le d-1$)  and
 extend the previous results in \cite{KRS} to the optimal range of exponents $p$ and $q$. Hence we completely characterize the range of
 $p,q$  for which the uniform estimate \eqref{Sob} holds.
 More precisely, we shall prove the following which is our main theorem.

\begin{thm} \label{mainthm}
Let $d\ge 3$ and $P(D)$  be a non-elliptic second order differential operator with constant coefficients. Then there exists an absolute constant $C$, depending only on $d,k,p$ and $q$, such that  \eqref{Sob} holds uniformly in $a_1,\dots, a_d,$ $b$, if and only if $(p,q)$ satisfies \eqref{scaling} and \eqref{cond2}\,\footnote{This pair $(1/p,1/q)$ lies on the open line segment $BB'$ in Figure \ref{fig}.}.
Furthermore,  if  $(p,q)$ is either
$
\big(\frac{2(d-1)}{d},\frac{2d(d-1)}{(d-2)^2}\big) $  or $\big(\frac{2d(d-1)}{d^2+2d-4},\frac{2(d-1)}{d-2}\big)
$
\footnote{These correspond to the points $B$ and $B'$ in Figure \ref{fig}.},  we have the  restricted weak type bound
\begin{equation}\label{wk}
\|u\|_{q,\infty} \le C\|P(D)u\|_{p,1}.
\end{equation}
\end{thm}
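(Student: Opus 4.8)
The plan is to reduce the uniform estimate for $P(D)$ to a single model estimate for the resolvent-type multiplier $(Q(\xi)+1)^{-1}$, by completing the square and scaling. By translation in frequency we may absorb the first-order part, writing $P(\xi)=Q(\xi-\xi_0)+c$ for suitable $\xi_0\in\C^d$ and $c\in\C$; by the scaling \eqref{scaling} we may normalize so that $P(D)$ becomes, after a real rotation preserving $Q$ and a complex translation, the operator with symbol $Q(\xi)-z$ where $z$ ranges over $\C$. The key object is therefore the family of convolution operators with Fourier multiplier $(Q(\xi)-z)^{-1}$, and what must be proved is the bound $\|(Q(D)-z)^{-1}f\|_q\le C\|f\|_p$ with $C$ independent of $z\in\C$, together with the restricted weak type analogue at the endpoints $B,B'$. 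The dependence on $z$ splits naturally into the regime $|z|\sim 1$ (the genuinely hard case, where the characteristic variety $\{Q(\xi)=\re z\}$ is a nondegenerate quadric) and the regimes $|z|\ll1$ or $|z|\gg1$, which reduce to the $|z|\sim1$ case by parabolic rescaling $\xi\mapsto\lambda\xi$ that exploits \eqref{scaling} exactly.

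For the main regime $|z|\sim1$ I would localize the multiplier dyadically in the distance to the characteristic surface $\Sigma=\{Q(\xi)=\re z\}$. Writing $(Q(\xi)-z)^{-1}=\sum_{j}m_j(\xi)$ where $m_j$ is supported where $|Q(\xi)-\re z|\sim 2^{-j}$ (plus a harmless smooth piece away from $\Sigma$ handled by stationary phase / Mikhlin-type estimates), each $m_j$ is essentially $2^{j}$ times a smooth bump adapted to a $2^{-j}$-neighborhood of a piece of $\Sigma$. The operator norm of the piece $T_j$ from $L^p$ to $L^q$ is then controlled by an $L^p$–$L^q$ adjoint-restriction (extension) estimate for the quadric $\Sigma$: because $Q$ is non-elliptic, $\Sigma$ is a hyperboloid rather than an ellipsoid, so one has nonvanishing Gaussian curvature but the surface is non-compact and has $k-1$ negative principal curvatures. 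The relevant tool is the Stein–Tomas-type $L^p\to L^2$ restriction estimate for such quadrics (valid because all principal curvatures are nonzero), interpolated/combined with the trivial $L^1\to L^\infty$ bound, giving $\|T_j\|_{p\to q}\lesssim 2^{-\e(p,q) j}$ precisely when $(1/p,1/q)$ lies strictly inside the segment $BB'$; summing the geometric series in $j$ yields the strong-type bound in the open range. At the endpoints $B,B'$ the geometric series fails to converge, and one instead sums using a real-interpolation / Bourgain-type summation argument: each $T_j$ maps the Lorentz space $L^{p,1}\to L^{q,\infty}$ with a bound $2^{-\e j}$ on one side and $2^{+\e' j}$ on the other after perturbing $p$ slightly, and an analytic-interpolation or atomic argument across the two perturbed estimates recovers \eqref{wk}. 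I would also invoke the Carleson–Sjölin / Stein oscillatory integral machinery, or directly the known sharp $L^p\to L^q$ bounds for the adjoint restriction operator associated to non-degenerate (possibly indefinite) quadratic surfaces, as the technical engine; the hyperbolic signature does not hurt the curvature-based decay but forces care with the non-compact directions along which $\Sigma$ is ruled.

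The necessity of \eqref{cond2} and \eqref{scaling} is already reduced to Section \ref{nec-sec}, so for the theorem only sufficiency and the endpoint bound \eqref{wk} remain. The main obstacle is twofold: first, obtaining decay $2^{-\e j}$ in the dyadic pieces uniformly as $z$ wanders over all of $\C$, which requires the curvature estimates to be stable under the degeneration of $\Sigma$ as $\re z\to 0$ (where the hyperboloid develops a cone point) — this is handled by a further dyadic decomposition near the cone and a rescaling that trades the degeneration for a gain matching \eqref{scaling}, so that the cone contribution obeys the same bounds as in the classical light-cone Strichartz estimate. Second, the endpoint \eqref{wk} at $B,B'$: the restricted-weak-type summation is delicate because, unlike the elliptic case treated by Kenig–Ruiz–Sogge where the endpoint $A,A'$ is reached by a single Lorentz-space interpolation, here one is at a \emph{corner} of the region where two distinct mechanisms (the Stein–Tomas exponent and the scaling line) meet, so I expect to need a bilinear or Bourgain–Guth style refinement, or at minimum a careful bookkeeping of the implicit constants in the restriction estimate near its own endpoint. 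Once the model resolvent estimate is established with $z$-uniform constants, undoing the reductions (rotation, complex translation, scaling) is routine and gives Theorem \ref{mainthm} in full.
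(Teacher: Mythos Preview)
Your overall strategy matches the paper's: reduce to the uniform resolvent estimate for $(Q(D)+z)^{-1}$, decompose dyadically in distance to the characteristic surface, combine Tomas--Stein $L^2$ restriction with an $L^1\to L^\infty$ kernel bound, and sum at the endpoints via Bourgain's interpolation lemma (this is exactly Lemma~\ref{interpol} here --- no bilinear or Bourgain--Guth refinement is needed, so that worry can be dropped). One step, however, would fail as written.

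The reduction from $P(D)$ to $Q(D)+z$ cannot be done by complex frequency translation. Writing $P(\xi)=Q(\xi-\xi_0)+c$ with $\xi_0\in\C^d$ and then ``undoing'' the shift amounts to multiplying $u$ by $e^{2\pi ix\cdot\xi_0}$, whose modulus $e^{-2\pi x\cdot\im\xi_0}$ is an unbounded exponential weight; this does not preserve any $L^p$ norm, so calling it routine is a genuine gap. The paper instead invokes the Kenig--Ruiz--Sogge reduction (their Proposition~2.1): a \emph{real} Lorentz transformation preserving $Q$, together with a limiting argument, brings $P(D)$ to the model form $Q(D)+\alpha(\partial_{x_l}+i\beta)\pm1$, and then a Littlewood--Paley argument in the single variable $x_l$ --- which already uses the restriction-extension estimate of Proposition~\ref{restriction} as an input --- reduces to the pure resolvent. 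A secondary technical point you gloss over: the sharp $L^1\to L^\infty$ kernel bound $\lambda^{d/2}\min(1,\lambda^{1/2}|\rho|^{-1/2})$ of Lemma~\ref{kernel} genuinely requires $\widehat\psi$ to vanish near the origin. A generic ``smooth bump'' with $\int\psi\ne0$ gives only $|K_\lambda|\lesssim\lambda$ near $x_d=0$, which is too weak for $\lambda\ll1$; the paper's Lemmas~\ref{del} and~\ref{pv} build $\psi$ with $\supp\widehat\psi\subset\{|t|\sim1\}$ precisely to force the kernel into $\{|x_d|\sim\lambda^{-1}\}$, and this extra half-power of $\lambda$ (and its controlled degradation as the hyperboloid degenerates to the cone) is exactly what moves the admissible range from the elliptic segment $AA'$ down to $BB'$.
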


The argument in \cite{KRS} which shows \eqref{Sob} for $1/p+1/q=1$ is  based on interpolation along a complex analytic family of distributions (see \cite{Stein}) for which  $L^1$-$L^\infty$ and $L^2$-$L^2$ estimates are relatively easier to obtain. Since this type of argument  heavily relies  on the structure of the specific family of distributions, the  method is  less flexible and seems restrictive. Instead, we directly analyze the associated  multiplier operators of which singularity lies on the surface given by the function $Q$. For this purpose,  we follow the  approach which is rather typical in the study of boundedness of operators of  Bochner-Riesz types \cite{CKLS, L, ls0}. In fact, we dyadically  decompose  the multiplier operator away from  the singularity by taking into account  the distance to the surface. This gives multiplier operators of different scales which are less singular and for these operators various $L^p$-$L^q$ estimates become available.  However, in order to prove the desired estimates  we need to obtain the sharp bounds in terms of the distance to the singularity (for example, see the estimates \eqref{psi1}, \eqref{psi2}). For this purpose we decompose the multiplier operator by imposing additional cancellation property so that  the resulting operators have the correct $L^1$-$L^\infty$ bound  (see Section \ref{tdel} for  details).\\

\begin{figure}\label{fig}
\includegraphics[width=\linewidth]{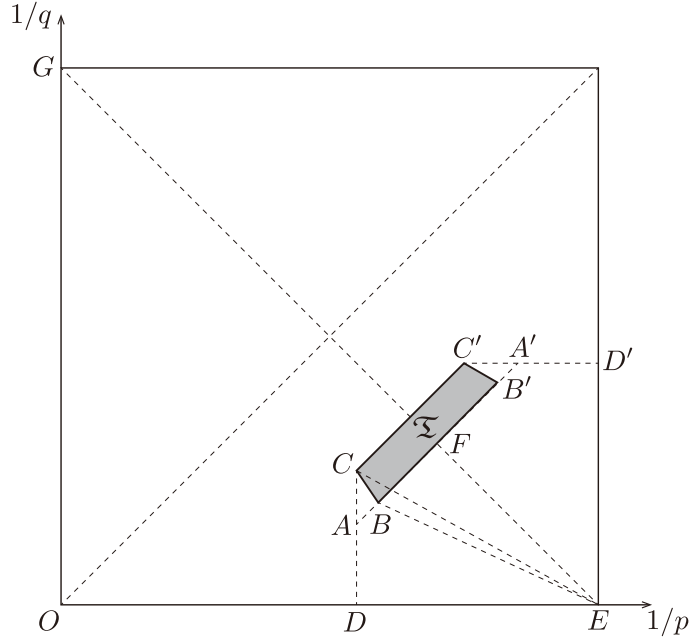}
\caption{The points $A=(\frac{d+1}{2d},\frac{d-3}{2d})$, $B$, $C$, $D=(\frac{d+1}{2d},0)$, $E=(1,0)$, $F=(\frac{d+2}{2d},\frac{d-2}{2d})$, $G=(0,1)$, $O=(0,0)$, and the dual points $A'$, $B'$, $C'$, $D'$ when $d\ge3$. The line segments $AA'$, $CC'$, $BE$, and $CE$ are on the lines $\frac{1}{p}-\frac{1}{q}=\frac{2}{d}$, $\frac{1}{p}-\frac{1}{q}=\frac{2}{d+1}$, $\frac{1}{q}=\frac{d-2}{d}(1-\frac{1}{p})$, and $\frac{1}{q}=\frac{d-1}{d+1}(1-\frac{1}{p})$, respectively.}
\end{figure}

\noindent\textit{Uniform resolvent estimate.} By the reduction in \cite{KRS} the crucial step for the proof of Theorem \ref{mainthm} is to obtain the  uniform resolvent estimate
\begin{equation}\label{resol}
\|u\|_{L^q(\mathbb{R}^d)}\le C\|(Q(D)+z)u\|_{L^p(\mathbb{R}^d)},\quad z\in\mathbb{C}
\end{equation}
for $ u\in W^{2,p}(\mathbb{R}^d).$  When $Q(D)=\frac{1}{4\pi^2}\Delta$, in \cite{KRS}  the resolvent estimates \eqref{resol} were proved for all $p$ and $q$ satisfying the conditions $1/p-1/q=2/d$ and $2d/(d+3)<p<2d/(d+1)$, by making use  of  the oscillatory integral estimate due to  Stein \cite{S}. From these estimates the  uniform inequalities \eqref{Sob} were obtained in  the optimal range of $p$, $q$. These correspond to the open line segment $AA'$ in Figure \ref{fig}. In particular, if $z$ is a positive real number, the estimate is related to Bochner-Riesz operator of order $-1$.  The interested reader is referred to \cite{Bo, b,bmo,  Gu, CKLS}.

Also, when $Q(D)$ is non-elliptic, Kenig, Ruiz, and Sogge proved that the  uniform resolvent estimate
\begin{equation}\label{resolne}
\|u\|_{L^q(\mathbb{R}^d)}\le C\|(Q(D)+z)u\|_{L^p(\mathbb{R}^d)},\quad |z|\ge1
\end{equation}
is true whenever $1/p+1/q=1$ and $2/(d+1)\le1/p-1/q\le2/d$ \cite[Theorem 2.3]{KRS}. If $1/p-1/q=2/d$, the uniform estimate \eqref{resolne} is equivalent to \eqref{resol} by scaling.

\medskip

In what follows we extend the known range of $p,q$ for which \eqref{resolne}} holds. In order to state  our result  we set
\[ B=\Big(\frac{d}{2(d-1)},\frac{(d-2)^2}{2d(d-1)}\Big),\  C=\Big(\frac{d+1}{2d},\frac{(d-1)^2}{2d(d+1)}\Big), \]
and also define $B'$ and $C'$ by  setting $P'=(1-y, 1-x)$ for $P=(x,y)$ (see Figure \ref{fig}). Let us denote by $\mathfrak T$  the closed trapezoid with vertices $B, B', C, C'$ from which the points $B, B', C, C'$ are removed.

\begin{thm}\label{uniSob1} Let $d\ge3$ and let $Q$ be a non-elliptic, non-degenerate real quadratic form as in Theorem \ref{mainthm}.
Let $(1/p,1/q)\in \mathfrak T$. Then there is an absolute constant $C$ such that \eqref{resolne} holds with $C$ independent of  $z$, $|z|\ge 1$. Furthermore, if $(1/p, 1/q)$ is one of the vertices $B, B', C, C'$,
then we have $L^{p,1}$-$L^{q,\infty}$ estimate.
\end{thm}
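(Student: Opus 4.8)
\textit{Strategy and reductions.} The plan is to prove \eqref{resolne} by a direct analysis of the multiplier operator $T_z$ with symbol $m_z(\xi)=(Q(\xi)+z)^{-1}$, decomposed according to the distance to its characteristic variety, passing to the limit $\im z\downarrow0$ only at the end. Since $m_z(\xi)=|z|^{-1}m_{z/|z|}(|z|^{-1/2}\xi)$, the dilation behaviour of $L^p$--$L^q$ operator norms together with $\tfrac1p-\tfrac1q\le\tfrac2d$ on $\mathfrak T$ reduces matters to $|z|=1$. Using the conjugation $z\mapsto\bar z$, the substitution $Q\mapsto-Q$ (admissible because $1\le k\le d-1$), and duality, one may take $z=-\cos\phi+i\sin\phi$, $0\le\phi\le\pi/2$; the characteristic variety of $m_z$ is then the quadric $\{Q(\xi)=\cos\phi\}$, a hyperboloid for $\phi<\pi/2$ and the null cone $\{Q(\xi)=0\}$ for $\phi=\pi/2$. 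In all cases $|m_z|\lesssim\min\big(|Q(\xi)|^{-1},(\mathrm{dist}(\xi,\{Q=\cos\phi\}))^{-1}\big)$ down to the regularisation scale, and the source of difficulty is the same: the quadric is noncompact and its curvature degenerates toward the null directions $\{Q=0\}$. The model case is $\phi$ near $0$, i.e. $z=-1+i\varepsilon$ with $0<\varepsilon\le\tfrac12$ and singular hypersurface $\Sigma=\{Q(\xi)=1\}$, and one must prove the bound uniformly in $\varepsilon$.

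\textit{Decompositions and rescaling.} I would write $m_z=\sum_{j}\sum_{\ell}m^{\ell}_{j}$, where $m^{\ell}_{j}$ is supported in $\{|\xi|\sim2^{\ell}\}\cap\{|Q(\xi)-1|\sim2^{-j}\}$, with $j$ running up to $\sim 2\ell+\log(1/\varepsilon)$ (below which $i\varepsilon$ regularises the symbol and the remaining scales are grouped into one remainder), and with the radial profile of the $j$-decomposition chosen to have mean zero --- this is the ``additional cancellation'' referred to in the introduction. After the rescaling $\xi=2^{\ell}\eta$, $m^{\ell}_{j}$ becomes $2^{-2\ell}$ times a multiplier at unit frequency, supported in a slab of width $\sim2^{-j-2\ell}$ about the quadric $\{Q(\eta)=2^{-2\ell}\}\cap\{|\eta|\sim1\}$. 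When $\ell=O(1)$ this quadric has all $d-1$ principal curvatures $\sim1$, so the analysis is exactly that of the elliptic resolvent and one invokes Stein's oscillatory integral estimate \cite{S} as in \cite{KRS}. When $\ell$ is large the quadric is a small, smooth perturbation of the light cone $\{Q(\eta)=0\}$ --- it carries $d-2$ principal curvatures $\sim1$ and one of size $\sim2^{-2\ell}$ --- so the governing input becomes restriction/extension and oscillatory-integral estimates of \emph{cone} type, cf.\ \cite{St,CKLS}.

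\textit{Core estimates, summation, endpoints.} For each rescaled piece I would obtain $L^p$--$L^q$ bounds with explicit powers of $2^{\ell},2^{j},\varepsilon$, by interpolating: (i) the trivial $L^2$--$L^2$ bound, of size $\sim2^{j+2\ell}$; (ii) an $L^1$--$L^{\infty}$ bound, in which the imposed mean-zero property is precisely what forces the kernel to have size $\lesssim2^{-j-2\ell}$ (the slab width) even near the origin, where otherwise $\int m^{\ell}_{j}$ would be far too large; and (iii) the relevant restriction/oscillatory-integral bound for the quadric, of sphere type for $\ell=O(1)$ and of cone type for $\ell$ large. These estimates decay geometrically in both $\ell$ and $j$ exactly when $(1/p,1/q)$ lies in the open trapezoid, so the double sum converges and gives \eqref{resolne}. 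At each vertex $B,B',C,C'$ one of the two decay rates vanishes and the $j$-sum, run to $j\sim\log(1/\varepsilon)$, diverges only logarithmically in $\varepsilon$; there one passes to restricted weak type, extracting the missing logarithm from the $L^{p,1}$-norm by estimating $\|T_{m^{\ell}_{j}}\chi_{E}\|_{q}$ through two bounds with opposite-sign $j$-dependence --- one from each edge of the trapezoid at the vertex --- and summing their minimum, which is $\lesssim|E|^{1/p}$; equivalently, one proves the four vertex $L^{p,1}$--$L^{q,\infty}$ estimates and fills in the open trapezoid by real interpolation.

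\textit{Main obstacle.} The crux is step (iii) for $\ell$ large. Because the curvature of $\Sigma$ degenerates toward the null directions, elliptic (Stein/Carleson--Sj\"olin) estimates are unavailable, and one must extract from the surviving $d-2$ curvatures cone-type restriction and oscillatory-integral bounds that are sharp \emph{uniformly} in the degeneracy parameter $2^{-2\ell}$, with the power of $2^{\ell}$ exactly right so that the frequency sum closes, while simultaneously keeping the $L^1$--$L^{\infty}$ bound at its optimal size through the cancellation. Reconciling the elliptic and cone regimes is what pins the admissible exponents down to precisely $\mathfrak T$; disentangling the four restricted weak-type endpoints from the otherwise logarithmically divergent scale sums is a secondary, more routine point.
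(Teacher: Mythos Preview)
Your strategy is correct and shares the paper's core ingredients: reduction to $|z|=1$; dyadic decomposition in distance to the characteristic variety with the crucial cancellation built in (the paper realizes this as $\operatorname{supp}\widehat\psi\subset\{|t|\sim1\}$, Lemmas~\ref{del}--\ref{pv}, which is exactly your ``mean zero'' condition); Littlewood--Paley in $|\xi|$ followed by rescaling so that for large $\ell$ the level set becomes a perturbed cone; the two core bounds (kernel $L^1\!\to\! L^\infty$ from cancellation, Lemma~\ref{kernel}, and the Tomas--Stein/cone extension bound, Lemma~\ref{Tomas-Stein}); and Bourgain's summation trick (Lemma~\ref{interpol}) for the restricted weak-type vertices.

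The main organizational difference is that the paper does \emph{not} run the full double $(\ell,j)$ sum on all of $m_z$. It first splits $m_z$ into real and imaginary parts and then splits the real part as $\sum A_l+\sum B_l+\sum C_l$ (see \eqref{re+im}, \eqref{decompABC}). The imaginary part and the $A_l,B_l$ pieces are dispatched by a different device: the \emph{generalized polar coordinates} $d\xi=\rho^{d-1}d\rho\,d\sigma_\pm$ adapted to $Q$, which via Minkowski reduce those pieces directly to the already-proved restriction--extension estimate for $\delta(Q\mp1)$ (Proposition~\ref{restriction}) followed by an elementary one-variable integral \eqref{integral}. Only the $C_l$ pieces need your route (Proposition~\ref{global-bound} plus Lemma~\ref{interpol}). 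This buys the paper a cleaner argument: the $\ell$-sum is absorbed into Proposition~\ref{restriction} once and for all, and the $\varepsilon$-dependence never enters the endpoint summation (so there is no ``logarithmic divergence in $\varepsilon$'' to cure). Your uniform double-sum approach should also succeed, but you must be careful that both the $\ell$- and $j$-sums close simultaneously at each vertex; note in particular that the paper treats $(1/p,1/q)=C$ separately according to whether $|a|\lessgtr|b|$, precisely because the $|\rho|$-power in \eqref{psi1} is nontrivial there.
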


\begin{rem}\label{remark1} It was claimed in \cite{bmo} (Theorem $6'$) that the $L^p$-$L^q$ estimates in Theorem \ref{mainthm} were established by combining the interpolation method (Theorem $1^\prime$) in \cite{bmo} and the estimates for the analytic family which are used in
\cite{KRS}. But the argument there does not seem to work.  In fact, to show  \eqref{resolne} by following the lines of argument in \cite{bmo} (see p.164) one has to consider the analytic family of operators $\{T_\lambda\}_{\lambda\in \mathbb C}$ which is defined along parameter $\lambda$  by
\[\widehat{T_\lambda f}(\xi)=C_\lambda(Q(\xi)+z)^\lambda \widehat f(\xi)\] with a suitable complex number  $C_\lambda$ (see \cite{KRS, bmo}).  But the crucial assumption  $|T_\lambda^*T_\lambda f| \le C|T_{2\re \lambda} f|$  of Theorem $1^\prime$  is not valid for $ T_\lambda$.
This inequality can not be satisfied  for  general complex number $z$ unless $z$ is real because
\[T_\lambda^*T_\lambda f=|C_\lambda|^2 \finv{  (Q(\xi)+\bar z)^{\bar \lambda} (Q(\xi)+z)^\lambda \widehat f(\xi)}.\]
\end{rem}

\

% \subsubsection*{Restriction-extension operator}
\noindent\textit{Restriction-extension operator.} The uniform estimates \eqref{Sob} and \eqref{resolne} are  closely related to
 the $L^2$-Fourier restriction estimate to the surfaces $\Sigma_\rho=\{\xi: Q(\xi)=\rho\}$.  We note that
  \[\frac1{Q(\xi)\pm 1+i\epsilon}-\frac1{Q(\xi)\pm1-i\epsilon}
 =\frac{-2i\epsilon}{(Q(\xi)\pm 1)^2+\epsilon^2}\to -2\pi i\,\delta(Q(\xi)\pm 1)\]
as $\epsilon\to 0$ in the sense of tempered distribution. Here $\delta$
is the delta distribution and  $\delta(Q(\xi)\pm \rho)$ is the composition  of the distribution $\delta$ with the smooth function $Q(\xi)\pm \rho$.
For $\rho\neq 0$, $\delta(Q(\xi)-\rho)$ is well defined. See \cite[pp.133--137] {H} for detail. It should be noted that  $\delta(Q(\xi)-\rho)$ coincides with the canonical measure on $\Sigma_\rho$.  Hence, the uniform estimate
\eqref{Sob} (also \eqref{resol} and \eqref{resolne})  implies
\begin{equation}\label{TT*0}
\Big\| \int \delta(Q(\xi)\pm 1) e^{2\pi ix\cdot \xi} \widehat{f}(\xi) d\xi  \Big\|_{L^q(\mathbb{R}^d)} \le C \| f \|_{L^p(\mathbb{R}^d)}, \quad f\in \mathcal{S}(\mathbb{R}^d).
\end{equation}
(Here $\mathcal{S}(\mathbb{R}^d)$ denotes the Schwartz space.)  Instead of the term {\it extension operator} which is typically used and somehow misleading   we call the operator $f\to \finv{\delta(Q\pm 1) \widehat{f}\,\,}$ {\it restriction-extension operator} since it is  composition of the Fourier restriction and extension (its dual) operators defined by the surface $\Sigma_{\mp 1}$.    As is clear to experts, \eqref{TT*0} is closely related to the inhomogeneous Strichartz estimates. See  \cite{KT, fos, vil,  Ta} and references therein.  
Especially, if $Q(\xi)=-\xi_1^2+ \xi_2^2+\dots+ \xi_d^2$, \eqref{TT*0} relates to the estimates for the Klein-Gordon equation. For example, see \cite{msw, Na-Oz} for earlier results.

   By scaling \eqref{TT*0} implies the estimate
\begin{equation}\label{TT*00}
\Big\| \int \delta(Q(\xi)-\rho) e^{2\pi ix\cdot \xi} \widehat{f}(\xi) d\xi  \Big\|_{L^q(\mathbb{R}^d)} \le C { |\rho|^{\frac 1 2 (\frac d p -\frac d q -2) }}  \| f \|_{L^p(\mathbb{R}^d)}, \quad f\in \mathcal{S}(\mathbb{R}^d),
\end{equation}
for $\rho\ne0$. This estimate will play an important role in proving {\eqref{resolne}}. 
Even if \eqref{TT*0} is obviously weaker than \eqref{resolne}, in view of our argument which  proves
\eqref{resolne}   the estimate \eqref{TT*0}  may be considered to be almost as strong as \eqref{resolne}.
In Section \ref{res-ex}  we show that \eqref{TT*0} holds for the same $p, q$ as in Theorem  \ref{uniSob1} (see Proposition \ref{restriction}). 

\

The rest of this paper is organized as follows. In section 2 we state and prove technical lemmas which decompose the delta and principal value distributions  into a sum of functions while these functions possess favorable cancellation properties.  These lemmas will be crucial for obtaining the sharp estimates.  Also, we show sharp estimates for the multiplier operators associated with  the surfaces $\Sigma_\rho$. In section 3 we prove the restriction-extension estimate \eqref{TT*0} and investigate its necessary conditions, which in turn give the optimality of the range of $p,q$ in Theorem \ref{mainthm}. In section 4 we prove Theorem \ref{mainthm} and Theorem \ref{uniSob1}. In section 5, as applications, we shall briefly mention results on Carleman inequalities and unique continuation.\\

%\subsubsection*{Notations} 
\noindent\textit{Notations.} Throughout this paper  the constant  $C$ may vary line to line. For $A,B>0$ we write $A\lesssim B$ to denote $A\le C B$ for some constant $C>0$ independent of $A,B$. By $A\sim B$ we mean $A\lesssim B$ and $B\lesssim A$. Also, $\widehat{f}$ and $f^\vee$ denote the Fourier and inverse Fourier transforms of $f$, respectively;
\[
\widehat{f}(\xi)=\int_{\mathbb{R}^d}e^{-2\pi i x\cdot\xi} f(x)dx , \quad f^\vee(x)=\int_{\mathbb{R}^d}e^{2\pi i x\cdot\xi}f(\xi)d\xi.
\]
We also use the notations  $\mathcal F(f)$ and $\finv{f}$  for the Fourier and the inverse Fourier transforms of $f$, respectively. In the sequel we frequently need to consider points $x,\eta \in \mathbb{R}^d$ in separated variables.  We write $x=(x_1, x', x'', x_d)\in\R\times \R^{k-1}\times\R^{d-k-1}\times \R$ and $\eta=(\eta_1,\eta',\eta'',\eta_d)\in\R\times \R^{k-1}\times\R^{d-k-1}\times \R$. We also write as {$\tilde{x}=(x_1,x',x'')\in\R^{d-1}$} and $\tilde{\eta}=(\eta_1,{\eta'},{\eta''})\in\R^{d-1}$.

\section{Preliminaries}

\subsection{Decomposition of distributions}
We now state and prove the following lemmas which provide dyadic decompositions of the delta and the principal value distributions. 
These are to be used in Section 3.

\begin{lem}\label{del}
There is a function $\psi\in \cS(\R)$ of which  Fourier transform  $\widehat{\psi}$ is supported in  $[-2,-1/2]\cup[1/2,2]$ such that, for all $g\in \cS(\R)$,
\begin{align*}
g(0)=\sum_{j=-\infty}^\infty2^{-j}\int \psi(2^{-j}x)g(x)dx.
\end{align*}

\begin{proof} The proof of this lemma is rather straightforward.  Let $\phi$ be a smooth function supported in $[-2,-1/2]\cup[1/2,2]$ such that
$\sum_{j=-\infty}^\infty \phi(2^jx)=1$ for $x\neq 0$. Then, for $g\in \cS(\R)$
\[g(0)=\int \widehat g(\xi) d\xi= \sum_{j=-\infty}^\infty \int \phi(2^j\xi)\widehat g(\xi) d\xi= \sum_{j=-\infty}^\infty  \int 2^{-j} \widehat\phi(2^{-j} x) g(x) dx.\]
Hence we need only to set $\psi=\widehat \phi$.
\end{proof}

\end{lem}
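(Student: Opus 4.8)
The plan is to manufacture the decomposition from a Littlewood--Paley partition of unity on the Fourier side and then transport it back to physical space by the multiplication formula. First I would fix a nonnegative $\phi\in C_c^\infty(\R)$ supported in $[-2,-1/2]\cup[1/2,2]$ with $\sum_{j\in\Z}\phi(2^j\xi)=1$ for every $\xi\ne0$; such a $\phi$ is produced in the standard way by normalizing a nonnegative bump adapted to the annulus $1/2\le|\xi|\le2$. Since $\phi$ vanishes near the origin the sum is locally finite, and nonnegativity gives $0\le\sum_{|j|\le N}\phi(2^j\xi)\le1$ uniformly in $N$.

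Next, for $g\in\cS(\R)$ the transform $\widehat g$ is again Schwartz, hence lies in $L^1(\R)$, and Fourier inversion evaluated at the origin reads $g(0)=\int_\R\widehat g(\xi)\,d\xi$. Inserting the partition of unity and invoking dominated convergence (partial sums bounded by $1$, $\widehat g\in L^1$) yields $g(0)=\sum_{j\in\Z}\int_\R\phi(2^j\xi)\,\widehat g(\xi)\,d\xi$. For each $j$ I would apply the multiplication formula with $u(\xi)=\phi(2^j\xi)$, whose Fourier transform is $2^{-j}\widehat\phi(2^{-j}\cdot)$ by the dilation rule; thus the $j$-th term equals $2^{-j}\int_\R\widehat\phi(2^{-j}x)\,g(x)\,dx$, and summing over $j$ gives the asserted identity with $\psi:=\widehat\phi$.

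It remains to record the two properties of $\psi$. As the Fourier transform of a compactly supported smooth function, $\psi\in\cS(\R)$. Its Fourier transform is $\widehat\psi=\widehat{\widehat\phi}$, i.e.\ the reflection $\xi\mapsto\phi(-\xi)$; since the support of $\phi$ is symmetric, $\widehat\psi$ is supported in $[-2,-1/2]\cup[1/2,2]$, as claimed.

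I do not expect a genuine obstacle here: the only step needing any care is the exchange of the infinite sum with the integral, which is precisely why I would insist on choosing $\phi\ge0$, so that the partial sums of the partition of unity stay uniformly bounded and dominated convergence applies verbatim. Everything else is the routine bookkeeping of Fourier conventions.
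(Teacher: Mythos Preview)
Your argument is correct and follows essentially the same route as the paper: both take a dyadic partition of unity $\phi$ on the Fourier side, write $g(0)=\int\widehat g=\sum_j\int\phi(2^j\xi)\widehat g(\xi)\,d\xi$, and then set $\psi=\widehat\phi$ via the multiplication formula. You have simply added the details the paper omits (the dominated-convergence justification via $\phi\ge0$ and the verification that $\widehat\psi$ has the required support by symmetry), so there is nothing to flag.
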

\begin{lem}\label{pv}
There is an odd function $\psi \in \cS (\R)$ of which  Fourier transform  $\widehat{\psi}$ is supported in $[-2,-1/2]\cup[1/2,2]$ such that, for all $g\in \cS(\R)$,
\begin{align}
\label{principle} \operatorname{p.v.}\int\frac{1}{x}\,g(x)dx=\sum_{j=-\infty}^\infty2^{-j}\int\psi(2^{-j}x)g(x)dx .
\end{align}
\end{lem}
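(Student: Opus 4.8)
The plan is to mimic the proof of Lemma \ref{del}, but starting from a smooth partition adapted to the sign function rather than to the constant function $1$. Recall that on the Fourier side the principal value distribution $\operatorname{p.v.}\frac1x$ has symbol $-\pi i\,\operatorname{sgn}(\xi)$ (with our normalization of the Fourier transform), so $\operatorname{p.v.}\int \frac1x g(x)\,dx = -\pi i \int \operatorname{sgn}(\xi)\,\widehat g(\xi)\,d\xi$. The idea is to write $\operatorname{sgn}(\xi)$ as a sum of dyadically supported smooth pieces and then undo the Fourier transform as in Lemma \ref{del}.

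First I would fix the Littlewood--Paley function $\phi$ from the proof of Lemma \ref{del}: $\phi$ smooth, supported in $[-2,-1/2]\cup[1/2,2]$, with $\sum_{j\in\Z}\phi(2^j\xi)=1$ for $\xi\ne0$. I will additionally arrange that $\phi$ is \emph{even}; this is harmless since one can always symmetrize. Then define $\Phi(\xi)=\operatorname{sgn}(\xi)\phi(\xi)$, which is smooth (it never sees the origin), supported in $[-2,-1/2]\cup[1/2,2]$, and \emph{odd}. Summing the partition of unity against $\operatorname{sgn}(\xi)$ gives $\operatorname{sgn}(\xi)=\sum_{j\in\Z}\Phi(2^j\xi)$ for $\xi\ne0$, and the set $\{\xi=0\}$ is null so it does not affect the integral. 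Hence, for $g\in\cS(\R)$,
\begin{align*}
\operatorname{p.v.}\int\frac1x g(x)\,dx
= -\pi i\int \operatorname{sgn}(\xi)\,\widehat g(\xi)\,d\xi
= -\pi i\sum_{j\in\Z}\int \Phi(2^j\xi)\,\widehat g(\xi)\,d\xi
= \sum_{j\in\Z} 2^{-j}\int \widehat{(-\pi i\,\Phi)}(2^{-j}x)\,g(x)\,dx,
\end{align*}
using Parseval/Fourier inversion together with the scaling identity $\int \Phi(2^j\xi)\widehat g(\xi)\,d\xi = 2^{-j}\int \widehat\Phi(2^{-j}x)g(x)\,dx$, exactly as in Lemma \ref{del}. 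One then sets $\psi = \widehat{(-\pi i\,\Phi)}$; since $\Phi\in\cS(\R)$ is odd, its Fourier transform is again in $\cS(\R)$ and odd, and the extra factor $-\pi i$ preserves oddness. The support condition on $\widehat\psi = (-\pi i\,\Phi)^{\vee\wedge}$... more precisely, $\widehat\psi$ should be $-\pi i\,\Phi$ up to sign of the argument, which is supported in $[-2,-1/2]\cup[1/2,2]$, as required.

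The only genuinely delicate points are bookkeeping: first, making sure the interchange of summation and integration (and of the distributional pairing with the sum) is legitimate, which follows because $\widehat g\in\cS(\R)$ decays rapidly while only finitely many $\Phi(2^j\cdot)$ are nonzero on any compact set away from $0$ and the tails are summable; second, getting the normalization constant for $\operatorname{p.v.}\frac1x$ right under the paper's convention $\widehat f(\xi)=\int e^{-2\pi i x\cdot\xi}f(x)\,dx$, where $\widehat{\operatorname{p.v.}\frac1x}(\xi) = -\pi i\,\operatorname{sgn}(\xi)$ — but since $\psi$ is only required to exist (no normalization is imposed in the statement), absorbing the constant into $\psi$ settles this. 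I do not expect any real obstacle; the proof is the odd-function analogue of Lemma \ref{del}, and the key observation is simply that multiplying a Littlewood--Paley partition of unity by $\operatorname{sgn}(\xi)$ turns each smooth bump into an odd smooth bump with the same support, whose inverse Fourier transform is the desired odd Schwartz function.
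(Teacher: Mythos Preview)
Your argument is correct and takes a genuinely different route from the paper. You work on the Fourier side: using $\widehat{\operatorname{p.v.}\tfrac1x}=-\pi i\,\operatorname{sgn}$, you decompose $\operatorname{sgn}(\xi)=\sum_j\Phi(2^j\xi)$ with $\Phi=\operatorname{sgn}\cdot\phi$ odd and compactly supported, then invert. The paper instead works on the physical side: it builds an even Schwartz function $\varphi$ with $\varphi(0)=0$ and $\sum_j\varphi(2^{-j}x)=1$ for $x\ne0$ (via a telescoping $\varphi(x)=\phi(x/2)-\phi(x)$), inserts this partition into the principal-value integral, and sets $\psi(x)=\varphi(x)/x$; the support condition on $\widehat\psi$ is then checked by hand from $\widehat\psi'=-2\pi i\,\widehat\varphi$.

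Your approach is cleaner and makes the parallel with Lemma~\ref{del} completely transparent. The paper's construction, on the other hand, is self-contained (it does not invoke the Fourier transform of $\operatorname{p.v.}\tfrac1x$) and has the side benefit of producing the pair $(\varphi,\psi)$ with $\varphi(x)=x\psi(x)$ and $\sum_j\varphi(2^{-j}x)=1$ explicitly; this relation is used verbatim later in Section~4 (see the decomposition around \eqref{decompABC}). Your $\psi$ also yields such a $\varphi$ a posteriori, so nothing is lost, but the paper's hands-on construction makes that link immediate. A minor point: your sign in $-\pi i\,\operatorname{sgn}$ is off by a reflection (under the paper's convention one gets $\operatorname{p.v.}\int\tfrac1x g=\pi i\int\operatorname{sgn}(\xi)\widehat g(\xi)\,d\xi$), but as you note this is absorbed into $\psi$ and does not affect the argument.
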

\begin{proof}
Let $\chi$ be a smooth function supported in the interval $[1,2]$ satisfying  $\int_\R \chi(x) dx=1/2$. We set $\widehat{\phi}(\xi)=\chi(\xi)+\chi(-\xi)$ and $\varphi(x)=\phi(x/2)-\phi(x)$.
Since $\phi (0)=1$ and $\phi \in \cS (\R)$,  it is easy to see that
\[
\sum_{j=-\infty}^{\infty}\varphi (2^{-j}x) = \lim_{\substack{m\to \infty \\ n\to \infty}} \sum_{j=-m}^{n}\varphi (2^{-j}x) = \lim_{\substack{m\to \infty \\ n\to \infty}} \big(\phi(2^{-n-1}x) -\phi (2^m x)\big) =1
\]
whenever $x\ne 0$. Let us set $\chi_0=\chi_{(-1,1)}$  and $\chi_\infty=1-\chi_0$.  Then,   for $g\in \cS (\R)$,
\[\operatorname{p.v.} \int \frac{1}{x}g(x)dx = \int \frac{1}{x} g(x)\chi_\infty(x) dx+\int \frac{1}{x}(g(x)-g(0)) \chi_0(x)dx.\]
Since $\frac{1}{x} g(x)\chi_\infty(x)+\frac{1}{x}(g(x)-g(0)) \chi_0(x)$ is integrable on $\R$, by the dominated convergence theorem we may write
\[\operatorname{p.v.} \int \frac{1}{x}g(x)dx =\sum_{j=-\infty}^{\infty}  \int \frac{1}{x} \varphi (2^{-j}x)[g(x)\chi_\infty(x)+ (g(x)-g(0)) \chi_0(x)]dx.\]
 Since $\varphi(0)=0$ and $\varphi$ is even, $\frac{1}{x} \varphi (2^{-j}x)$ is integrable  and $\int \frac{1}{x} \varphi (2^{-j}x)\chi_0(x) dx=0$. Thus, we get
 \[\operatorname{p.v.} \int \frac{1}{x}g(x)dx =\sum_{j=-\infty}^{\infty}  \int \frac{1}{x} \varphi (2^{-j}x)g(x) dx.\]
To get the desired \eqref{principle} we need only to set  \[\psi(x)=\frac{\varphi(x)}x.\]

It  now remains to show that $\supp \widehat{\psi} \subset [-2,-1/2]\cup[1/2,2]$. Since $\widehat{\psi}(t)
=\int e^{-2\pi it x}\frac{\varphi(x)}{x}dx$ it is clear that $\frac{d\widehat{\psi}}{dt}(t) = -2\pi i \widehat{\varphi}(t)$. Hence we may write
\begin{equation}\label{psi}
\widehat{\psi}(t)=-2\pi i\int_{-\infty}^t \widehat{\varphi}(s)ds =-2\pi i\int_{-\infty}^t 2\chi(2s) -\chi(s) +2 \chi (-2s) -\chi (-s) ds.
\end{equation}
Since $\chi$ is supported in $[1,2]$, it is easy to check that the integral vanishes if $|t|\ge 2$ or $|t|\le 1/2$. From this it follows that  $\psi\in \cS(\R)$.
\end{proof}

\subsection{ Estimates associated with the surfaces $\Sigma_\rho$}\label{tdel} In sections 3 and 4 we shall apply smooth partition of unity and change of coordinates so that the surface $\{\xi:Q(\xi)=\rho \}$ is written locally as the graph  of
\[\mathcal G_\rho(\tilde{\eta})=\frac{|{\eta'}|^2-|{\eta''}|^2+\rho}{2\eta_1}\]
over the set
\begin{equation*}
{\mathcal{D}}=\big\{\tilde{\eta}=(\eta_1, \eta', \eta'')\in\R^{d-1}: |{\eta'}|\le 1, |{\eta''}|\le 1, \eta_1\in [1,2]\big\}.
\end{equation*}

\begin{lem} \label{decay} Let $\mathcal G_\rho$ be given as in the  above and set
\[
I(x)=\int e^{{2\pi} i(\tilde{x}\cdot\tilde{\eta} +x_d \mathcal G_\rho(\tilde \eta))} \tilde{\chi}(\tilde{\eta}) d \tilde\eta,
\]
{where $\tilde{\chi}\in C^\infty_c(\mathcal{D})$.} Then, there is a constant $C$, independent of $\rho$, such that
\begin{align}
 \label{d-1decay}
|I(x)|&\le C (1+ {|x_d|}{|\rho|})^{-\frac12}(1+|x_d|)^{-\frac{d-2}2}.\end{align}
\end{lem}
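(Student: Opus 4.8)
The plan is to find two independent sources of decay for $I(x)$, whose phase is $\Phi(\tilde\eta)=\tilde x\cdot\tilde\eta+x_d\mathcal{G}_\rho(\tilde\eta)$: the nondegeneracy of $\mathcal{G}_\rho$ in the transverse variables $(\eta',\eta'')$ — its Hessian there being $\frac{x_d}{\eta_1}\,\mathrm{diag}(I_{k-1},-I_{d-k-1})$, which is $\rho$-free and will produce $(1+|x_d|)^{-(d-2)/2}$ — and the one-dimensional oscillation in $\eta_1$ coming from the term $\frac{x_d\rho}{2\eta_1}$, which will produce $(1+|x_d\rho|)^{-1/2}$. Since $|I(x)|\le\|\tilde\chi\|_{L^1}\lesssim1$, one only needs to argue when $|x_d|\ge1$ or $|x_d\rho|\ge1$. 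In the range $|x_d|\le1$, $|x_d\rho|\ge1$ one has $|\rho|\gtrsim1$, and after discarding $|\rho|\le4$ (where $|x_d\rho|\lesssim1$ and the claim is trivial) one gets $|\partial_{\eta_1}^2\Phi|=|x_d|\,\big||\eta'|^2-|\eta''|^2+\rho\big|/\eta_1^3\gtrsim|x_d\rho|$ on $\mathcal{D}$; van der Corput's lemma in $\eta_1$, followed by integrating the $O(1)$ remainder over $\{|\eta'|,|\eta''|\le1\}$, gives $|I(x)|\lesssim|x_d\rho|^{-1/2}$, which is \eqref{d-1decay} because $(1+|x_d|)^{-(d-2)/2}\sim1$ here.

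For the main regime $|x_d|\ge1$, I would split $\Phi=\phi_0(\eta_1)+A(\eta_1,\eta')+B(\eta_1,\eta'')$ with $\phi_0(\eta_1)=x_1\eta_1+\frac{x_d\rho}{2\eta_1}$, $A=x'\cdot\eta'+\frac{x_d}{2\eta_1}|\eta'|^2$ and $B=x''\cdot\eta''-\frac{x_d}{2\eta_1}|\eta''|^2$, so that $I(x)=\int_1^2 e^{2\pi i\phi_0(\eta_1)}J(\eta_1)\,d\eta_1$ with $J(\eta_1)=\int e^{2\pi i(A+B)}\tilde\chi\,d\eta'd\eta''$. First I would remove the non-stationary region $\max(|x'|,|x''|)\ge C_0|x_d|$ for a large absolute constant $C_0$: there $|\nabla_{(\eta',\eta'')}(A+B)|\gtrsim|x'|+|x''|$ on $\supp\tilde\chi$, so repeated integration by parts in $(\eta',\eta'')$ yields $|J(\eta_1)|+|\partial_{\eta_1}J(\eta_1)|\lesssim_N(1+|x_d|)^{-N}$, and combining the trivial bound with van der Corput in $\eta_1$ (using $|\phi_0''(\eta_1)|=|x_d\rho|/\eta_1^3\sim|x_d\rho|$) gives $|I(x)|\lesssim(1+|x_d\rho|)^{-1/2}(1+|x_d|)^{-N}$, much stronger than needed.

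It remains to handle $\max(|x'|,|x''|)\le C_0|x_d|$. For each fixed $\eta_1\in[1,2]$ the phase $A+B$ is an exact nondegenerate quadratic form in $(\eta',\eta'')$ plus a linear term, with unique critical point $(\eta'_*,\eta''_*)=(-\eta_1 x'/x_d,\,\eta_1 x''/x_d)$; translating to that point and applying Parseval's identity with the (analytically continued) Gaussian Fourier transform evaluates the inner integral exactly as
\[
J(\eta_1)=c\,|x_d|^{-\frac{d-2}{2}}\,e^{2\pi i\,\frac{(|x''|^2-|x'|^2)\eta_1}{2x_d}}\,b(\eta_1,x),
\]
where $b(\eta_1,x)$ is a smooth function built from $\widehat{\tilde\chi}$ satisfying $|b|+|\partial_{\eta_1}b|\lesssim1$ uniformly in $\rho$ — the bound on $\partial_{\eta_1}b$ using exactly that $|x'/x_d|,|x''/x_d|\lesssim1$ in this region (this is why the non-stationary part had to be split off first). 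Then $I(x)=c\,|x_d|^{-\frac{d-2}{2}}\int_1^2 e^{2\pi i\Psi(\eta_1)}b(\eta_1,x)\,d\eta_1$ with $\Psi(\eta_1)=\big(x_1+\frac{|x''|^2-|x'|^2}{2x_d}\big)\eta_1+\frac{x_d\rho}{2\eta_1}$, so $\Psi''(\eta_1)=x_d\rho/\eta_1^3$ has modulus $\sim|x_d\rho|$; van der Corput in $\eta_1$ together with the trivial bound $\int_1^2|b|\lesssim1$ yields the factor $(1+|x_d\rho|)^{-1/2}$, hence $|I(x)|\lesssim|x_d|^{-\frac{d-2}{2}}(1+|x_d\rho|)^{-1/2}$, which is \eqref{d-1decay} for $|x_d|\ge1$.

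I expect the main obstacle to be this last step: producing the exact reduction to the one-dimensional integral and checking that the residual amplitude $b(\eta_1,x)$ is \emph{slowly varying in $\eta_1$}, uniformly in $\rho$ — i.e. that $\partial_{\eta_1}b$ does not pick up powers of $|x_d|$. This is what forces the preliminary removal of the region $\max(|x'|,|x''|)\gtrsim|x_d|$, and it works smoothly because all the $\rho$-dependence sits in the single term $\frac{x_d\rho}{2\eta_1}$, equivalently in $\Psi''=x_d\rho/\eta_1^3$, while the transverse curvature — and hence the power $|x_d|^{-(d-2)/2}$ — is independent of $\rho$.
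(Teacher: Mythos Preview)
Your argument is correct and uses the same two ingredients as the paper: the Gaussian Fourier transform (Plancherel) in $(\eta',\eta'')$ to pull out $|x_d|^{-(d-2)/2}$, and van der Corput in $\eta_1$ (second derivative $x_d\rho/\eta_1^3$) to pull out $(1+|x_d\rho|)^{-1/2}$. The paper's organization is slightly slicker and sidesteps exactly the obstacle you flagged: after Plancherel in $(\eta',\eta'')$ it applies Fubini so that the $\eta_1$-integral becomes innermost, and it keeps \emph{all} of the resulting oscillation --- including the $x',x''$-dependent terms, which are linear in $\eta_1$ and hence do not affect the second derivative --- in the phase; the amplitude for van der Corput is then simply $\eta_1^{(d-2)/2}\phi_1(\eta_1)\,\mathcal F(\phi_2(\eta_1,\cdot,\cdot))(y',y'')$, whose $\eta_1$-derivative is bounded uniformly in $x$, so no preliminary excision of $\{\max(|x'|,|x''|)\gtrsim|x_d|\}$ is needed, and the Schwartz decay in $(y',y'')$ closes the outer integral.
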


\begin{proof} We may assume $\tilde{\chi}(\tilde{\eta})=
{\phi_1(\eta_1)\phi_2 (\eta_1, {\eta'},{\eta''})}$ with $\phi_1\in C^\infty(\mathbb R)$ supported in $[1/2,4]$ and $\phi_2\in  C^\infty(\mathbb R^{d-1})$ supported in $\mathcal D$. Let us write
\[I(x)=\int e^{2\pi i(x_1\eta_1 + x_d \frac \rho{2\eta_1}) } \phi_1(\eta_1) \iint e^{ 2\pi i(x'\cdot \eta' +x''\cdot \eta''
+x_d \frac{|{\eta'}|^2-|{\eta''}|^2}{2\eta_1})}  \phi_2 ( \eta_1, {\eta'},{\eta''}) d {\eta'} d{\eta''} d\eta_1.\]
By Plancherel's theorem the inner integral equals
\[
c \Big(\frac {\eta_1}{|x_d|}\Big)^{\frac{d-2}{2}} \iint e^{-\pi i\frac {\eta_1}{x_d} (|y'|^2-|y''|^2)}
\Phi_{\eta_1}({{y}',{y''}}) d {y'} d{y''},
\]
where $\Phi_{\eta_1}(y',y'')=e^{-\pi i\frac{\eta_1}{x_d}(|x'|^2-|x''|^2+2x'\cdot y'-2x''\cdot y'')}\mathcal F(\phi_2(\eta_1, \cdot, \cdot))(y',y'')$ and $c$ is a constant with $|c|=1$. Hence
\[
I(x)= c |x_d|^{-\frac{d-2}2} \iint \Big( \int\eta_1^\frac{d-2}2
e^{ 2\pi i( x_1\eta_1 +x_d \frac \rho{2\eta_1} -\frac {\eta_1}{2x_d} (|{y'}|^2-|{y''}|^2))}
 \phi_1(\eta_1)\,\,\Phi_{\eta_1}({{y'},{y''}}) d\eta_1 \Big)d {y'} d{y''}  .
\]
 By the van der Corput lemma the inner integral is bounded by $C (1+ {|x_d|}{|\rho|})^{-\frac12}$ (e.g. \cite[Corollary in p.334]{Stein2}). Hence the desired bound follows.
\end{proof}

Let us consider the evolution operator $U_{\rho}(t)$ which is given by
\[
U_\rho(t) g(\tilde x)= \int e^{ {2\pi i}  (\tilde x\cdot \tilde \eta+ t \mathcal G_\rho(\tilde \eta))} \tilde \chi(\tilde\eta)\, \widehat g(\tilde \eta) d\tilde \eta .  \]
From \eqref{d-1decay} we have $\|U_\rho (t) g\|_\infty\lesssim |t|^{-\sigma}|\rho|^{\frac{d-2}2-\sigma} \|g\|_1$ for $\frac{d-2}2\le \sigma \le \frac{d-1}2$. Using  the standard $TT^*$ argument (or following the argument in \cite{KT}) we have, for $\frac{d-2}2\le \sigma \le \frac{d-1}2$,
\begin{equation}\label{tomst}
\|U_\rho(t) g(\tilde x)\|_{L^\frac{2(\sigma+1)}{\sigma}(dt d\tilde x)} \lesssim  |\rho|^{\frac{1}{2(\sigma+1)}(\frac{d-2}2-\sigma)} \|g\|_2.
\end{equation}
In fact, with $\sigma=\frac{d-1}2$, $\sigma=\frac{d-2}2$ we have the estimates
$ \| U_\rho(t) g(\tilde x) \|_{L^\frac{2(d+1)}{d-1}(dt d\tilde x)} \lesssim  |\rho|^{-\frac{1}{2(d+1)}} \|g\|_2,$
$\| U_\rho(t) g(\tilde x) \|_{L^\frac{2d}{d-2}(dt d\tilde x)} \lesssim  \|g\|_2$, respectively.  Interpolation of these estimates also gives \eqref{tomst}.

\

Let $m$ be a smooth function on $\mathcal D$ satisfying
\[ \frac12\le  m \le 2.\]
For $\lambda >0$, we define a multiplier  operator $\T^\rho_\lambda$ by
\begin{equation}\label{mainop}
\wh{\T^\rho_\lambda f}(\eta)=\tilde{\chi}(\tilde{\eta}) {\psi}\big(\lambda^{-1} m(\tilde\eta)(\eta_d -\mathcal G_\rho(\tilde{\eta}))\big)\widehat{f}(\eta),
\end{equation}
where $\psi\in \cS(\mathbb R)$ and $\tilde{\chi}$ is a smooth function supported in $\mathcal{D}$.

\begin{lem}\label{Tomas-Stein}
Let $0<\lambda \le 1$, $\psi\in \cS(\mathbb R)$ and $\T^\rho_\lambda$ be defined by \eqref{mainop}. Then, for $\frac{d-2}2\le \sigma \le \frac{d-1}2$,  the estimate
\begin{equation}\label{TS}
\|\T^\rho_\lambda f\|_{\frac{2(\sigma+1)}{\sigma}} \le C  |\rho|^{\frac{1}{2(\sigma+1)}(\frac{d-2}2-\sigma)}  \lambda^{\frac12} \|f\|_2
\end{equation}
holds with the constant  $C$ independent of $\rho$ and $\lambda$.
\end{lem}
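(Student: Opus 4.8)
The plan is to cash in the thinness of the slab $\{|\eta_d-\mathcal G_\rho(\tilde\eta)|\lesssim\lambda\}$ for the factor $\lambda^{1/2}$ by foliating that slab into \emph{vertical translates} of the graph $\{\eta_d=\mathcal G_\rho(\tilde\eta)\}$, applying the Strichartz-type bound \eqref{tomst} for $U_\rho$ on each leaf, and summing the leaves with a Cauchy--Schwarz argument. The crucial observation that makes this work is that each level set $\{\eta_d-\mathcal G_\rho(\tilde\eta)=s\}$ is just the original surface shifted by $s$ in the $\eta_d$-direction, so it carries the same curvature and the decay \eqref{d-1decay}, hence \eqref{tomst}, applies uniformly in $s$ (the shift only introduces a harmless unimodular factor).

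First I would write $\T^\rho_\lambda f$ as an inverse Fourier integral and, for fixed $\tilde\eta$, substitute $\eta_d=\mathcal G_\rho(\tilde\eta)+s$ in the $\eta_d$-variable. This turns the multiplier into $\tilde\chi(\tilde\eta)\,\psi(\lambda^{-1}m(\tilde\eta)s)$, and after interchanging the $s$- and $\tilde\eta$-integrations (legitimate for $f\in\cS(\R^d)$, and one then extends by density) one gets
\[
\T^\rho_\lambda f(x)=\int_\R e^{2\pi i x_d s}\,U_\rho(x_d)[G_s](\tilde x)\,ds,
\]
where $G_s$ is the function with $\widehat{G_s}(\tilde\eta)=\tilde\chi(\tilde\eta)\,\psi(\lambda^{-1}m(\tilde\eta)s)\,\widehat f(\tilde\eta,\mathcal G_\rho(\tilde\eta)+s)$ and the cutoff appearing in the definition of $U_\rho$ is chosen to be $1$ on $\supp\tilde\chi$.

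Next I would apply Minkowski's inequality in $L^q(\R^d)=L^q(dx_d\,d\tilde x)$ with $q=\tfrac{2(\sigma+1)}{\sigma}$, drop the unimodular factor $e^{2\pi i x_d s}$, and use \eqref{tomst} on each slice to obtain
\[
\|\T^\rho_\lambda f\|_q\le\int_\R\|U_\rho(\cdot)[G_s]\|_{L^q(dx_d\,d\tilde x)}\,ds\lesssim |\rho|^{\frac1{2(\sigma+1)}(\frac{d-2}2-\sigma)}\int_\R\|G_s\|_{L^2(\R^{d-1})}\,ds.
\]
It remains to show $\int_\R\|G_s\|_2\,ds\lesssim\lambda^{1/2}\|f\|_2$. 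Since $\psi\in\cS(\R)$ and $\tfrac12\le m\le2$ we have $|\psi(\lambda^{-1}m(\tilde\eta)s)|\le C_N(1+\lambda^{-1}|s|)^{-N}$, so $\|G_s\|_2^2\lesssim_N (1+\lambda^{-1}|s|)^{-2N}\int_{\R^{d-1}}|\widehat f(\tilde\eta,\mathcal G_\rho(\tilde\eta)+s)|^2\,d\tilde\eta$; applying Cauchy--Schwarz in $s$ gives
\[
\int_\R\|G_s\|_2\,ds\lesssim_N\Big(\int_\R(1+\lambda^{-1}|s|)^{-N}\,ds\Big)^{1/2}\Big(\int_\R\int_{\R^{d-1}}|\widehat f(\tilde\eta,\mathcal G_\rho(\tilde\eta)+s)|^2\,d\tilde\eta\,ds\Big)^{1/2}.
\]
The first factor equals $(C_N\lambda)^{1/2}$ for $N\ge2$, and the change of variables $\eta_d=\mathcal G_\rho(\tilde\eta)+s$ identifies the second factor with $\|\widehat f\|_2=\|f\|_2$, which yields \eqref{TS}.

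I do not anticipate any serious difficulty here: the genuine idea is the foliation/translation-invariance step in the first two paragraphs, after which the $\lambda^{1/2}$ gain is a one-line Cauchy--Schwarz trade between $L^1_s$ and $L^2_s$ over a region of effective $s$-measure $\sim\lambda$. The only points requiring a little care are the bookkeeping of the cutoffs $\tilde\chi$ (the one in \eqref{mainop} versus the one in the definition of $U_\rho$) and the justification of Fubini, both of which are routine. Note also that the argument does not actually use $\lambda\le1$, so the estimate holds for all $\lambda>0$.
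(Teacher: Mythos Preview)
Your argument is correct and follows essentially the same route as the paper: foliate by the level sets $\eta_d-\mathcal G_\rho(\tilde\eta)=s$, apply the Strichartz bound \eqref{tomst} on each leaf via Minkowski, and recover $\lambda^{1/2}$ by Cauchy--Schwarz in $s$ together with Plancherel. The only difference is cosmetic: the paper first introduces an auxiliary dyadic decomposition $\T^\rho_\lambda=\sum_{j\ge0}\T_j$ in the scale $|\eta_d-\mathcal G_\rho(\tilde\eta)|\sim 2^j\lambda$ and extracts a $2^{-j}$ from the decay of $\psi$ on each piece, whereas you use the Schwartz bound $|\psi(\lambda^{-1}m s)|\lesssim_N(1+\lambda^{-1}|s|)^{-N}$ directly and avoid that decomposition altogether---a mild streamlining of the same idea.
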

\begin{proof} 
Let $\beta$ be a smooth function supported on $[-2,-1/2]\cup [1/2,2]$ and $\beta_0$ be a smooth function supported on $[-2,2]$ which satisfy $\beta_0(t) + \sum_{j=1}^{\infty}\beta(2^{-j}t)=1$ on $ \R.$ By using this, we  decompose the operator $\T^\rho_\lambda$ so that
\[ \T^\rho_\lambda f = \sum_{j=0}^\infty \T_jf,\]
where $\T_j f$, $j\ge 0$, is defined by  $\wh{\T_0f}(\eta)={\beta_0}\big(\lambda^{-1} (\eta_d -\mathcal G_\rho(\tilde{\eta}))\big)\wh{\T^{\rho}_\lambda f}(\eta)$ and
\begin{align*}
\wh{\T_j f}(\eta)&={\beta}\big((2^j\lambda)^{-1} (\eta_d -\mathcal G_\rho(\tilde{\eta}))\big)\wh{\T^{\rho}_\lambda f}(\eta),\quad  ~ j\ge 1.
\end{align*}
So, it suffices to show that,  for $j\ge 0$,
\begin{equation} \label{l22}
\|\T_j f\|_\frac{2(\sigma+1)}{\sigma}
 \lesssim 2^{-j}|\rho|^{\frac{1}{2(\sigma+1)}(\frac{d-2}2-\sigma)}  \lambda ^{\frac12} \|f\|_2 .
\end{equation}

For  $j\ge 1$, by changing  variables $\eta_d \to\eta_d +\mathcal G_\rho(\tilde{\eta})$, we have
\begin{equation*}
\T_j f(x)=\int e^{2\pi i\eta_d x_d} \beta(\frac{\eta_d}{2^j\lambda})  \int e^{2\pi ix\cdot(\tilde{\eta}, \mathcal G_\rho(\tilde{\eta}))} \tilde{\chi}(\tilde{\eta}) \psi(\frac{m(\tilde\eta) \eta_d}\lambda)\widehat{f}(\tilde{\eta}, \eta_d+\mathcal G_\rho(\tilde{\eta})) d\tilde{\eta} d\eta_d .
\end{equation*}
We observe that the  inner integral  equals
\begin{equation}\label{inner}
U_\rho(x_d) \big( \mathcal F^{-1}_{\tilde x} \big(\psi(\frac{m(\cdot) \eta_d}\lambda){\widehat f}(\cdot, \eta_d+\mathcal G_\rho(\cdot))\big) \big)(\tilde x).
\end{equation}
Here  $\mathcal F^{-1}_{\tilde x} h$ is the inverse Fourier transform of $h$ in $\tilde x$.
By \eqref{tomst} and Plancherel's theorem, we see that the $L_{x}^\frac{2(\sigma+1)}{\sigma}$-norm of \eqref{inner} is bounded by
\[
C|\rho|^{\frac{1}{2(\sigma+1)}(\frac{d-2}2-\sigma)} \|\psi(\frac{m(\cdot) \eta_d}\lambda)\widehat{f}(\cdot, \eta_d+\mathcal G_\rho(\cdot)) \|_{L^2(\R^{d-1})}.\]
Thus, using Minkowski's inequality we get
\[\|\T_j f\|_\frac{2(\sigma+1)}{\sigma}
\lesssim  |\rho|^{\frac{1}{2(\sigma+1)}(\frac{d-2}2-\sigma)} \int |\beta(\frac {\eta_d}{2^j\lambda})|
 \|\psi(\frac{m(\cdot) \eta_d}\lambda)\widehat{f}(\cdot, \eta_d+\mathcal G_\rho(\cdot)) \|_{L^2(\R^{d-1})} d\eta_d.
 \]
Note that $ |\psi(t)| \lesssim |t|^{-2}$ if $|t|\ge 1/2.$ Since $m\sim 1$ and
 $\supp\beta(2^{-j}\cdot)\subset [2^{j-1},2^{j+1}]$,
 $ |\psi(\frac{m(\tilde{\eta})\eta_d}{\lambda}) | \lesssim 2^{-2j},$ whenever $\beta(\frac {\eta_d}{2^j\lambda}) \neq 0.$
Using  this and the Cauchy-Schwarz inequality,  we get, for $j\ge 1$,
\begin{align*}
\|\T_j f\|_\frac{2(\sigma+1)}{\sigma}
&
\lesssim 2^{-2j} |\rho|^{\frac{1}{2(\sigma+1)}(\frac{d-2}2-\sigma)} \int |\beta(\frac {\eta_d}{2^j\lambda})|
 \|\widehat{f}(\cdot, \eta_d+\mathcal G_\rho(\cdot)) \|_{L^2(\R^{d-1})} d\eta_d \\
& \lesssim 2^{-j}|\rho|^{\frac{1}{2(\sigma+1)}(\frac{d-2}2-\sigma)}  \lambda ^{\frac12} \left ( \int \| \widehat{f}(\cdot, \eta_d+\mathcal G_\rho(\cdot))\|^2_{L^2(\R^{d-1})} d\eta_d \right )^\frac{1}{2}.
\end{align*}
 By reversing the change of variables and Plancherel's theorem, the last integral is clearly bounded by $\| f\|_2^2$.  Hence, we get \eqref{l22} for $j\ge 1$.

 Similarly, repeating the same argument one can easily show \eqref{l22}  for $j=0$. So, the proof is completed.
\end{proof}

In the following lemma we obtain an estimate for the kernel of $\T_\lambda^\rho$. For this  the support property of $\widehat \psi$ becomes important in that the estimate \eqref{Kdecay} is no longer true for a general $\psi\in \cS(\R)$.

\begin{lem} \label{kernel}
For every $\rho\neq 0 $ and $0<\lambda \le 1$, let $\mathcal{K}_\lambda^\rho$ be the kernel of $\T_\lambda^\rho$, i.e.,
\begin{equation*}
\mathcal{K}^\rho_\lambda(x)=\int_{\mathbb{R}^d}\psi\big(\lambda^{-1} m(\tilde\eta)(\eta_d-\mathcal G_\rho(\tilde{\eta}))\big) \tilde{\chi}(\tilde{\eta}) e^{2\pi ix\cdot\eta} d\eta ,
\end{equation*}
where $\psi\in \cS(\mathbb R)$ and $\tilde{\chi}$ is a smooth function supported on $\mathcal{D}$.
Suppose $\widehat{\psi}$ is  supported on $\{t: 1/2\le  |t| \le 2\}$.
Then $\mathcal{K}^\rho_\lambda$ is supported in the set $\{x \in \R^d: |x_d| \sim \lambda^{-1}\}$ and
\begin{equation} \label{Kdecay}
|\K^\rho_\lambda(x)| \le C \lambda^{\frac d2} \min(1, \lambda^\frac12|\rho|^{-\frac12}) .
\end{equation}
\end{lem}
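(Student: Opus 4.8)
The plan is to exploit the tensor structure of the phase and symbol by integrating first in $\eta_d$ and then in $\tilde\eta$. Writing $\mathcal K^\rho_\lambda(x)=\int_{\mathcal D}\tilde\chi(\tilde\eta)e^{2\pi i\tilde x\cdot\tilde\eta}\Big(\int_{\R}\psi\big(\lambda^{-1}m(\tilde\eta)(\eta_d-\mathcal G_\rho(\tilde\eta))\big)e^{2\pi ix_d\eta_d}\,d\eta_d\Big)d\tilde\eta$, I compute the inner integral by the change of variable $s=\lambda^{-1}m(\tilde\eta)(\eta_d-\mathcal G_\rho(\tilde\eta))$: it equals $\frac{\lambda}{m(\tilde\eta)}e^{2\pi i x_d\mathcal G_\rho(\tilde\eta)}\,\widehat\psi\!\big(-\tfrac{\lambda x_d}{m(\tilde\eta)}\big)$. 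Since $\widehat\psi$ is supported in $\{1/2\le|t|\le2\}$ and $m\sim1$, this factor forces $|x_d|\sim\lambda^{-1}$, which gives the stated support property and also the factor $\lambda$ in front.

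What remains is to estimate $\mathcal K^\rho_\lambda(x)=\lambda\int_{\mathcal D}\frac{\tilde\chi(\tilde\eta)}{m(\tilde\eta)}\,\widehat\psi\!\big(-\tfrac{\lambda x_d}{m(\tilde\eta)}\big)\,e^{2\pi i(\tilde x\cdot\tilde\eta+x_d\mathcal G_\rho(\tilde\eta))}\,d\tilde\eta$, uniformly over $\tilde x$, on the set $|x_d|\sim\lambda^{-1}$. The integrand is a smooth, compactly supported amplitude (with derivatives in $\tilde\eta$ of size $O(1)$, since $\lambda|x_d|\sim1$ controls the argument of $\widehat\psi$) times the oscillatory factor $e^{2\pi i(\tilde x\cdot\tilde\eta+x_d\mathcal G_\rho(\tilde\eta))}$. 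This is precisely the oscillatory integral $I(x)$ treated in Lemma~\ref{decay}, up to the harmless extra smooth factor; so I would either invoke Lemma~\ref{decay} directly (after absorbing $\tilde\chi/m\cdot\widehat\psi(-\lambda x_d/m(\cdot))$ into the cutoff $\tilde\chi$, noting its bounds are uniform in $\rho$ and in $x_d$ on the relevant range) or re-run the same stationary-phase computation. Lemma~\ref{decay} yields $|I(x)|\lesssim(1+|x_d||\rho|)^{-1/2}(1+|x_d|)^{-(d-2)/2}$, and with $|x_d|\sim\lambda^{-1}$ this is $\lesssim \lambda^{(d-2)/2}\min(1,(\lambda/|\rho|)^{1/2})$ (using $1+|x_d||\rho|\gtrsim\max(1,|\rho|/\lambda)$). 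Multiplying by the prefactor $\lambda$ gives $|\mathcal K^\rho_\lambda(x)|\lesssim\lambda^{d/2}\min(1,\lambda^{1/2}|\rho|^{-1/2})$, which is \eqref{Kdecay}.

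The main obstacle — and the reason the support hypothesis on $\widehat\psi$ is essential, as the remark before the lemma stresses — is controlling the $\tilde\eta$-derivatives of the amplitude $\widehat\psi(-\lambda x_d/m(\tilde\eta))$ so that the van der Corput / stationary-phase bound of Lemma~\ref{decay} applies with constants independent of $\lambda$ and $\rho$. For a general $\psi\in\cS(\R)$ the factor $\psi(\lambda^{-1}m(\eta_d-\mathcal G_\rho))$ is not localized in $\eta_d$, the $\eta_d$-integration does not produce the clean $|x_d|\sim\lambda^{-1}$ localization, and one only gets rapid decay in $\lambda x_d$ rather than compact support; so the key point to check carefully is that on the range $\lambda|x_d|\sim1$ all $\tilde\eta$-derivatives of $\widehat\psi(-\lambda x_d/m(\tilde\eta))$ are $O(1)$, after which the estimate reduces verbatim to Lemma~\ref{decay}.
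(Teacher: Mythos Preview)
Your proposal is correct and follows essentially the same route as the paper: after evaluating the $\eta_d$-integral you arrive at the identical formula $\mathcal K^\rho_\lambda(x)=\lambda\int m(\tilde\eta)^{-1}\widehat\psi(-\lambda x_d/m(\tilde\eta))\,e^{2\pi i(\tilde x\cdot\tilde\eta+x_d\mathcal G_\rho(\tilde\eta))}\tilde\chi(\tilde\eta)\,d\tilde\eta$, read off the support constraint $|\lambda x_d|\sim1$ from $\operatorname{supp}\widehat\psi$, absorb the uniformly $C_c^\infty(\mathcal D)$ factor $m^{-1}\widehat\psi(-\lambda x_d/m)\tilde\chi$ into the cutoff, and apply Lemma~\ref{decay}. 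The only cosmetic difference is that the paper reaches this formula via Fourier inversion of $\psi$ followed by the $\eta_d$-integration, whereas you do a direct substitution in the $\eta_d$-integral; the two computations are equivalent.
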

\begin{proof} By inversion we write \[\psi\big(\lambda^{-1} m(\tilde\eta)(\eta_d-\mathcal G_\rho(\tilde{\eta}))\big)
=\frac\lambda{m(\tilde\eta)} \int e^{2\pi i \tau (\eta_d-\mathcal G_\rho(\tilde{\eta}))}
\widehat \psi(\frac{\lambda\tau}{m(\tilde \eta)}) d\tau.\]
Inserting this and making the change of variables $\eta_d \to \eta_d + \mathcal G_\rho(\tilde{\eta})$ and taking integration in $\eta_d$, we have
\begin{align*}
\mathcal{K}^\rho_\lambda(x)=\lambda \int   \frac 1{m(\tilde\eta)} \widehat \psi\big(\frac{-\lambda x_d}{ m(\tilde \eta)}\big) e^{{2\pi} i(\tilde{x}\cdot\tilde{\eta} +x_d \mathcal G_\rho(\tilde \eta))} \tilde{\chi}(\tilde{\eta}) d \tilde\eta.
\end{align*}
 Since $\widehat \psi$ is supported in $\{|t|\sim 1\}$ and $m\sim 1$ on the support of $\tilde \chi$, we may assume $|\lambda x_d|\sim 1$ because $\mathcal{K}^\rho_\lambda(x)=0$ otherwise.
Hence we set \[\chi(\tilde \eta)= \frac 1{m(\tilde\eta)} \widehat \psi\big(\frac{-\lambda x_d}{ m(\tilde \eta)}\big) \tilde{\chi}(\tilde{\eta}).\] Then
$\chi(\tilde \eta)$ is  contained in $C_c^\infty(\mathcal D)$ uniformly in  $x_d,\lambda$.  Hence we may repeat the argument in the proof of Lemma \ref{decay} to see that
\[\Big|\int   e^{{2\pi} i(\tilde{x}\cdot\tilde{\eta} +x_d \mathcal G_\rho(\tilde \eta))} {\chi}(\tilde{\eta}) d \tilde\eta\Big|\lesssim (1+ {|x_d|}{|\rho|})^{-\frac12}(1+|x_d|)^{-\frac{d-2}2} . \]
This gives the desired estimate \eqref{Kdecay} because  $|\lambda x_d|\sim 1$.
\end{proof}

\begin{prop}\label{tomst1} Let $\lambda >0$, $0<|\rho|\lesssim 1$, and  $\psi\in \cS(\mathbb R)$ with $\widehat \psi$ supported in $[-2,-1/2]\cup [1/2,2]$ and let $\T^\rho_\lambda$ be defined by \eqref{mainop}. Then, for  $ 1\le p\le 2$ and $\frac{1}{q}=\frac{d-1}{d+1}(1-\frac{1}{p})$, 
\begin{equation}\label{TS1}
\|\T^\rho_\lambda f\|_{q} \lesssim  |\rho|^{-\frac{1}2(\frac1p-\frac1q)}
  \lambda^{\frac{d}p-\frac{d-1}2} \|f\|_p,
 \end{equation}
  and,  for  $ 1\le p\le 2$ and $\frac{1}{q}=\frac{d-2}{d}(1-\frac{1}{p})$,
  \begin{equation}\label{TS2}
\|\T^\rho_\lambda f\|_{q} \lesssim    \lambda^{\frac{d-1}p-\frac{d-2}2} \|f\|_p.
 \end{equation}
\end{prop}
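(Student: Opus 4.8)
The plan is to prove Proposition \ref{tomst1} by combining the dispersive-type $L^1$-$L^\infty$ bound for the kernel $\K^\rho_\lambda$ from Lemma \ref{kernel} with the Tomas--Stein-type $L^2$-$L^{2(\sigma+1)/\sigma}$ estimate from Lemma \ref{Tomas-Stein}, and then interpolating. Indeed, the two target lines $\frac1q=\frac{d-1}{d+1}(1-\frac1p)$ and $\frac1q=\frac{d-2}{d}(1-\frac1p)$ are precisely the lines $BE$ and $CE$ in Figure \ref{fig} passing through $E=(1,0)$; the endpoint $E$ corresponds to the $L^1$-$L^\infty$ bound, and the other endpoints (on $\frac1p=\frac12$) correspond to the $L^2$-$L^{2(\sigma+1)/\sigma}$ bounds with $\sigma=\frac{d-1}2$ and $\sigma=\frac{d-2}2$ respectively. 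So the whole proposition is a two-endpoint interpolation on each of these two segments, with one shared endpoint at $E$.

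First I would record the $L^1$-$L^\infty$ estimate. Since $\T^\rho_\lambda f = \K^\rho_\lambda * f$, Lemma \ref{kernel} gives $\|\T^\rho_\lambda f\|_\infty \le \|\K^\rho_\lambda\|_\infty \|f\|_1 \lesssim \lambda^{d/2}\min(1,\lambda^{1/2}|\rho|^{-1/2})\|f\|_1$. In the regime $0<|\rho|\lesssim 1$ relevant here, one has $\lambda^{1/2}|\rho|^{-1/2}\le 1$ when $\lambda\le|\rho|$ and otherwise the bound is $\lambda^{d/2}$; but for the interpolation it is cleanest to use the single bound $\|\T^\rho_\lambda f\|_\infty\lesssim \lambda^{(d+1)/2}|\rho|^{-1/2}\|f\|_1$ valid whenever $\lambda\lesssim|\rho|$, together with $\|\T^\rho_\lambda f\|_\infty\lesssim\lambda^{d/2}\|f\|_1$ valid always. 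Checking the exponents: interpolating $\|f\|_1\to\|f\|_\infty$ with constant $\lambda^{(d+1)/2}|\rho|^{-1/2}$ against $\|f\|_2\to\|f\|_{2(d+1)/(d-1)}$ with constant $|\rho|^{-1/(2(d+1))}$ (Lemma \ref{Tomas-Stein}, $\sigma=\frac{d-1}2$, using $\lambda\le1$ so $\lambda^{1/2}\le1$ — actually one should keep the $\lambda^{1/2}$ factor) should reproduce \eqref{TS1}. Likewise interpolating the always-valid $\|f\|_1\to\|f\|_\infty$ bound $\lambda^{d/2}$ against $\|f\|_2\to\|f\|_{2d/(d-2)}$ with constant $1$ (Lemma \ref{Tomas-Stein}, $\sigma=\frac{d-2}2$) should give \eqref{TS2}. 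I would carry out the arithmetic with $\frac1p=1-\theta+\frac\theta2=1-\frac\theta2$ so that $\theta=2(1-\frac1p)$ and $\frac1q=\frac\theta2\cdot\frac{d-1}{d+1}$, which indeed equals $\frac{d-1}{d+1}(1-\frac1p)$, and check that the power of $\lambda$ comes out to $\frac dp-\frac{d-1}2$ and the power of $|\rho|$ to $-\frac12(\frac1p-\frac1q)$; the analogous check for \eqref{TS2} uses $\frac1q=\frac\theta2\cdot\frac{d-2}d=\frac{d-2}d(1-\frac1p)$ and produces $\lambda$-power $\frac{d-1}p-\frac{d-2}2$ and no $|\rho|$ factor (consistent with the $\sigma=\frac{d-2}2$ endpoint being $|\rho|$-independent and the $L^1$ endpoint $\lambda^{d/2}$ being $|\rho|$-free).

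The one genuinely delicate point — the main obstacle — is matching the powers of $\lambda$ so that the Riesz--Thorin interpolation of the operator-norm bounds is actually legitimate: Lemma \ref{Tomas-Stein} is stated for $0<\lambda\le1$ and carries a clean factor $\lambda^{1/2}$, while the $L^1$-$L^\infty$ bound carries $\lambda^{d/2}$ (or $\lambda^{(d+1)/2}|\rho|^{-1/2}$). Interpolating with interpolation parameter $\theta$ gives $\lambda$-exponent $(1-\theta)\cdot\frac d2+\theta\cdot\frac12$ on the first branch — wait, that is not $\frac dp-\frac{d-1}2$ in general, so in fact one must use the \emph{sharper} $L^1$ bound $\lambda^{(d+1)/2}|\rho|^{-1/2}$ on the part of the range where it is available and argue that the target line $BE$ (resp. $CE$) lies in the correct region; a short case analysis in $\lambda$ versus $|\rho|$, exactly as in the remark following Lemma \ref{Tomas-Stein}, resolves this. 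I would also need to note that $\widehat\psi$ supported in $[-2,-1/2]\cup[1/2,2]$ (not merely $\psi\in\cS$) is what licenses invoking Lemma \ref{kernel}, and that the hypothesis $0<|\rho|\lesssim1$ is used precisely to dispose of the $\min(1,\lambda^{1/2}|\rho|^{-1/2})$ in favor of the form needed. Modulo these bookkeeping points the proof is a direct interpolation, so I would keep it brief: state the two endpoint inputs, state the interpolation, and verify the exponents.
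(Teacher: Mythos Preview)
Your approach is exactly the paper's: interpolate the $L^1$--$L^\infty$ kernel bound from Lemma~\ref{kernel} against the $L^2$--$L^{2(\sigma+1)/\sigma}$ bound from Lemma~\ref{Tomas-Stein} at $\sigma=\tfrac{d-1}{2}$ (for \eqref{TS1}) and $\sigma=\tfrac{d-2}{2}$ (for \eqref{TS2}). Two points where you are making it harder than necessary. First, no case split in $\lambda$ versus $|\rho|$ is needed: since $\min(1,\lambda^{1/2}|\rho|^{-1/2})\le \lambda^{1/2}|\rho|^{-1/2}$ always, Lemma~\ref{kernel} gives $\|\T^\rho_\lambda f\|_\infty\lesssim \lambda^{(d+1)/2}|\rho|^{-1/2}\|f\|_1$ for \emph{all} $0<\lambda\le 1$, not just $\lambda\lesssim|\rho|$; interpolating this with \eqref{TS} at $\sigma=\tfrac{d-1}{2}$ and $\theta=2(1-\tfrac1p)$ gives the $\lambda$-exponent $(1-\theta)\tfrac{d+1}{2}+\theta\cdot\tfrac12=\tfrac{d}{p}-\tfrac{d-1}{2}$ and the $|\rho|$-exponent $-\tfrac12(\tfrac1p-\tfrac1q)$ on the nose. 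Second, you do need to dispose of the range $\lambda>1$, which you flag but do not resolve: the paper handles it in one line by observing that for $\lambda\ge 1$ the multiplier $\tilde\chi(\tilde\eta)\psi(\lambda^{-1}m(\tilde\eta)(\eta_d-\mathcal G_\rho(\tilde\eta)))$ is uniformly bounded in $C^\infty$ (each $\eta_d$-derivative gains a harmless factor $\lambda^{-1}\le 1$), so the $L^p$--$L^q$ operator norm is $O(1)$, while the claimed powers of $\lambda$ in \eqref{TS1}--\eqref{TS2} are nonnegative for $1\le p\le 2$.
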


\begin{proof}  We may assume $\lambda\le 1$. Otherwise,  the $L^p$-$L^q$ bound for the multiplier operator is  uniformly bounded because the multiplier is smooth and uniformly bounded in $C^\infty$. The estimate   \eqref{Kdecay} gives
the estimates $\|\T^\rho_\lambda f\|_{\infty} \lesssim  \lambda^{\frac d2} \|f\|_1$ and  $\|\T^\rho_\lambda f\|_{\infty} \lesssim  \lambda^{\frac {d+1}2} |\rho|^{-\frac12}\|f\|_1$. Then interpolation between the first and \eqref{TS} with $\sigma=\frac{d-2}2$ gives  \eqref{TS2}.  Similarly  we interpolate
the second estimate and \eqref{TS} with $\sigma=\frac{d-1}2$ to get \eqref{TS1}.   \end{proof}

\section{Restriction-extension estimate}\label{res-ex}

In this section  we study $L^p$-$L^q$ boundedness of the operator $
f\to \finv{ \delta(Q-\rho) \widehat{f}} $. In fact, we prove Proposition \ref{restriction}  below and  investigate  the allowable range of $p, q$ on which the operator is bounded from $L^p$ into $L^q$.

Recall $\Sigma_\rho=\{\xi: Q(\xi)=\rho\}, \rho\neq0$ and let $d\sigma_\rho$ be the surface  measure  (induced Lebesgue measure) on $\Sigma_\rho$.
To begin with, we note that
\begin{equation}\label{measure}
\int e^{2\pi ix\cdot \xi} \delta(Q(\xi)-\rho) \widehat{f}(\xi) d\xi =\int_{\Sigma_\rho} e^{2\pi ix\cdot \xi} \widehat{f}(\xi) \frac{d\sigma_\rho}{|\nabla Q(\xi)|}.
\end{equation}

\begin{prop}\label{restriction} Let $\rho=\pm 1$.
Under the same assumption as in  Theorem \ref{uniSob1},  we have \eqref{TT*0} whenever
$(1/p,1/q)$ is contained in $\mathfrak T$.  Additionally, if $(1/p,1/q)=B, C, C',$ and $B'$, then
we have $L^{p,1}$-$L^{q,\infty}$ estimate.
\end{prop}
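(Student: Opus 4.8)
The plan is to reduce the restriction--extension estimate to the model operators $\T^\rho_\lambda$ of Section~2 and then to carry out two nested summations. First I would fix $\rho=\pm1$; after a smooth partition of unity and the hyperbolic linear change of variables realising $\Sigma_\rho$ locally as the graph $\eta_d=\mathcal G_\rho(\tilde\eta)$ over $\mathcal D$, I decompose the non-compact surface dyadically by frequency size $|\xi|\sim2^l$, $l\ge0$, and rescale the $l$-th piece by $\xi\mapsto2^l\xi$. Parabolic scaling sends it to a bounded piece of $\Sigma_{\rho_l}$ with $\rho_l=2^{-2l}\rho$, $0<|\rho_l|\le1$, and produces the factor $2^{l(\frac dp-\frac dq-2)}$. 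The pieces have Fourier support in pairwise disjoint dyadic annuli and $1<p<2<q<\infty$ throughout $\mathfrak T$, so a Littlewood--Paley/orthogonality argument should reduce \eqref{TT*0} for $T_\rho$ to a bound for the rescaled pieces that is uniform in $l$. Concretely, it will be enough to prove, for $0<|\rho|\le1$, an $L^{p,1}$--$L^{q,\infty}$ bound for the localised restriction--extension operator $S_\rho f=\finv{\tilde\chi(\tilde\eta)\,\delta(\eta_d-\mathcal G_\rho(\tilde\eta))\,\widehat f}$, $\tilde\chi\in C_c^\infty(\mathcal D)$, with a power of $|\rho|$ cancelling $2^{l(\frac dp-\frac dq-2)}$ under $|\rho|=2^{-2l}$ --- that is $\|S_\rho\|\lesssim1$ at the vertices $B,B'$ (where $\frac dp-\frac dq=2$) and $\|S_\rho\|\lesssim|\rho|^{-1/(d+1)}$ at $C,C'$ (where $\frac dp-\frac dq=\frac{2d}{d+1}$).

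For such a bounded piece I would decompose the one-dimensional delta distribution by Lemma~\ref{del}, absorbing the bounded nonvanishing Jacobian of the change of variables into the weight $m$, to get $\delta(\eta_d-\mathcal G_\rho(\tilde\eta))=\sum_{\lambda}\lambda^{-1}\psi(\lambda^{-1}m(\tilde\eta)(\eta_d-\mathcal G_\rho(\tilde\eta)))$ over dyadic $\lambda$, with $\psi$ as in Lemma~\ref{del}, so $\widehat\psi$ is supported in $\{1/2\le|t|\le2\}$ --- exactly the hypothesis that makes Lemma~\ref{kernel}, hence Proposition~\ref{tomst1}, applicable. Since $|\eta_d-\mathcal G_\rho|\lesssim1$ on $\supp\tilde\chi$, the terms $\lambda\gtrsim1$ collapse to a multiplier with a compactly supported symbol of $C^\infty$ norms bounded uniformly in $\rho$, giving an operator bounded $L^p\to L^q$ uniformly in $\rho$ (and, after the scaling above, a summable-in-$l$ error), so $S_\rho=\sum_{\lambda\le1}\lambda^{-1}\T^\rho_\lambda$ up to a harmless term. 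Then I would insert Proposition~\ref{tomst1}: the $\lambda$-power in \eqref{TS2} exceeds $\lambda^1$ exactly when $\frac1p>\frac d{2(d-1)}$, so summing \eqref{TS2} over $\lambda\le1$ gives $\|S_\rho\|_{p\to q}\lesssim1$ on the line $\frac1q=\frac{d-2}d(1-\frac1p)$ in that range; likewise the $\lambda$-power in \eqref{TS1} exceeds $\lambda^1$ exactly when $\frac1p>\frac{d+1}{2d}$, so summing \eqref{TS1} gives $\|S_\rho\|_{p\to q}\lesssim|\rho|^{-\frac12(\frac1p-\frac1q)}$ on $\frac1q=\frac{d-1}{d+1}(1-\frac1p)$ there. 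At the borderline points $B$ and $C$, where the $\lambda$-sum is exactly critical, I would instead obtain $\|S_\rho\|_{L^{p,1}\to L^{q,\infty}}\lesssim1$ at $B$ and $\lesssim|\rho|^{-1/(d+1)}$ at $C$ by the standard real-interpolation summation trick, interpolating the estimates just on either side of the vertex (where the $\lambda$-power changes sign).

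Assembling, by the scaling factor above the $l$-th piece of $T_\rho$ is bounded (restricted weak type) uniformly in $l$: at $B$ since $\frac dp-\frac dq=2$, and at $C$ since $2^{l(\frac{2d}{d+1}-2)}|\rho_l|^{-1/(d+1)}=1$. The orthogonality step then gives \eqref{TT*0} in the restricted weak sense at $B$ and $C$. Because $Q$ is even, the convolution kernel $\finv{\delta(Q-\rho)}$ is real and even, so the restriction--extension operator is self-adjoint and its bound at $(\frac1p,\frac1q)$ is equivalent to the one at $(1-\frac1q,1-\frac1p)$; this supplies $B'$ and $C'$. Real interpolation among the four restricted weak type estimates at $B,B',C,C'$ then yields the strong type bound \eqref{TT*0} at every interior point of $\mathfrak T$, which together with the vertex estimates is the assertion.

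The main obstacle is the pair of endpoint summations and the need to calibrate them against each other. Within a bounded piece the sum over scales $\lambda$ is exactly critical at $B$ and $C$ and is closed only by passing to restricted weak type; the sum over the dyadic frequency annuli $2^l$ --- the step that genuinely requires orthogonality, since summing the per-piece bounds by the triangle inequality diverges everywhere on $\mathfrak T$ --- closes only when the rescaled per-piece norms are uniform in $l$, which, by the scaling computation, happens precisely on $\mathfrak T$. The heart of the matter is that the two sharp bounds \eqref{TS1}--\eqref{TS2} of Proposition~\ref{tomst1}, with the weight $|\rho|^{-\frac12(\frac1p-\frac1q)}$ present on the line through $C$ but absent on the line through $B$, are calibrated so that \emph{both} summations close exactly on $\mathfrak T$; tracking the powers of $|\rho|$ through the rescalings is where the bookkeeping is heaviest.
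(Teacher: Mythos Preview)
Your proposal is correct and follows essentially the same route as the paper: Littlewood--Paley decomposition in $|\xi|$ plus rescaling to reduce to a bounded piece of $\Sigma_{\rho}$ with $|\rho|\le1$, then Lemma~\ref{del} to write $\delta$ as $\sum_\lambda\lambda^{-1}\T^\rho_\lambda$, then Proposition~\ref{tomst1} combined with the Bourgain summation trick (Lemma~\ref{interpol}) at the critical vertices $B,C$, and finally duality and real interpolation. The one point to handle with care is the ``orthogonality step'' reassembling the dyadic frequency pieces in restricted weak type; this requires the Lorentz-space Littlewood--Paley inequality (Lemma~\ref{little}) together with the normability of $L^{q/2,\infty}$ for $q>2$, exactly as in the paper.
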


When $Q(D)=\frac{1}{4\pi^2}\Delta$   and $\rho$ is negative real number, then \eqref{TT*00} is  an estimate for the Bochner-Riesz operator of order $-1$.
$L^p$-$L^q$ estimates for the Bochner-Riesz operator of negative order have been studied by several authors {(\cite{Bo, So, CS, bmo})}.  The early results go back as far as  Tomas and Stein (\cite{T, S}).  It was shown  that
\eqref{TT*00} holds for  $p = (2d + 2)/(d+ 3)$ and $q=(2d + 2)/(d-1)$, which is equivalent to $L^{(2d + 2)/(d+ 3)}$-$L^2$ restriction estimates for the sphere.  The estimate \eqref{TT*00} is now known  on the optimal range of $p$ and $q$. That is,
for $1\le p,  q\le \infty$, $Q(D)=\frac{1}{4\pi^2}\Delta$ and $\rho=-1$,  \eqref{TT*00}  is true if and only if  $(1/p,1/q)$ is in the set
\[
\Big\{ \Big(\frac1p,\frac1q\Big)\in[0,1]\times[0,1]: \frac1p-\frac1q \ge\frac{2}{d+1}, \,\,\, \frac1p>\frac{d+1}{2d}, \,\,\, \frac1q<\frac{d-1}{2d} \Big\}.
\]

On the other hand, if $Q(D)$ is not the Laplace operator, the inequality \eqref{TT*0} is known to be true if $p=2d/(d+2)$ and $q=2d/(d-2)$ (the point $F$ in Figure \ref{fig}), which is due to Strichartz \cite{St}.  As is mentioned before, for the special case  $Q(\xi)=-\xi_1^2+ \xi_2^2+\dots+ \xi_2^2$  there are other available estimates  \cite{msw, Na-Oz, Ta}.

\subsection{Proof of Proposition \ref{restriction}}
\label{rest-ext}
We prove Proposition \ref{restriction} by showing the restricted weak type estimates at the endpoints $B$, $B'$, $C$, and $C'$ in Figure \ref{fig}. Then real interpolation between these estimates gives the estimate \eqref{TT*0}  for  $(\frac1p, \frac1q)\in \mathfrak T$.  By  duality it is sufficient to show that, for $(1/p, 1/q)=B,$ $C$,
\begin{equation}\label{decomj}
\|\mathcal F^{-1}\big(\delta(Q\pm 1) \widehat f\,\big)\|_{q,\infty}\le C\|f\|_{p,1}.
\end{equation}

Let us define the  projection operator $P_j$, $j\in\mathbb{Z}$, by \[\widehat{P_jf}(\xi)=\beta(2^{-j}|\xi|)\widehat{f}(\xi),\]
where $\be : \R_+ \to [0,1]$ is a smooth function supported on the interval $[1/2, 2]$ satisfying $\sum_{j=-\infty}^{\infty} \be (2^{-j}t)=1$ for $t>0$. Since $1<p\le 2\le q<\infty$, by Littlewood-Paley theory and Minkowski inequality it is sufficient to show
\begin{equation}\label{decomjj}
\|\mathcal F^{-1}\big(\delta(Q\pm 1) \widehat {P_j f}\,\big)\|_{q,\infty}\le C\|P_jf\|_{p,1}.
\end{equation}%%
To see this we need the following simple lemma.

\begin{lem}\label{little} Let $1<p<\infty$, $1\le r\le \infty$, and let $L^{p,r}$ denote the Lorentz spaces.  Then
\[\|f\|_{p,r}\lesssim \|(\sum |P_j f|^2)^\frac12\|_{p,r}\lesssim \|f\|_{p,r}.\]
\end{lem}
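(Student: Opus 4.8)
\textbf{Proof plan for Lemma \ref{little}.}

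The plan is to prove the two-sided Littlewood--Paley inequality in Lorentz spaces by transferring the corresponding $L^p$ estimates through real interpolation. First I would recall the classical Littlewood--Paley inequality for $L^p$: for each fixed $1<p<\infty$ there are constants so that $\|f\|_p \sim \|(\sum_j |P_jf|^2)^{1/2}\|_p$, where the $P_j$ are the smooth frequency projections defined just above the lemma (this is standard, e.g.\ via Khintchine's inequality together with $L^p$-boundedness of the family of multipliers $\sum_j \pm\beta(2^{-j}|\xi|)$, which are Mihlin multipliers uniformly in the choice of signs). The point is that the square function operator $Sf = (\sum_j |P_jf|^2)^{1/2}$, viewed as a sublinear (vector-valued linear) operator, is bounded on $L^{p_0}$ for every $p_0\in(1,\infty)$, and likewise the reconstruction direction is controlled.

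Next I would fix $1<p<\infty$ and pick $p_0,p_1$ with $1<p_0<p<p_1<\infty$. Since $S$ is bounded $L^{p_0}\to L^{p_0}$ and $L^{p_1}\to L^{p_1}$, the real interpolation functor $(\cdot,\cdot)_{\theta,r}$ applied with $\theta$ chosen so that $1/p = (1-\theta)/p_0 + \theta/p_1$ gives boundedness $S: L^{p,r}\to L^{p,r}$ for every $1\le r\le\infty$, because $(L^{p_0},L^{p_1})_{\theta,r} = L^{p,r}$ with equivalent norms. Here one should note that $S$ is built from the linear operator $f\mapsto (P_jf)_j$ into $L^p(\ell^2)$, and real interpolation applies to this vector-valued linear operator verbatim, with $(L^{p_0}(\ell^2),L^{p_1}(\ell^2))_{\theta,r}=L^{p,r}(\ell^2)$; taking the $\ell^2$-norm pointwise then yields the stated inequality $\|(\sum_j|P_jf|^2)^{1/2}\|_{p,r}\lesssim \|f\|_{p,r}$. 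For the reverse inequality $\|f\|_{p,r}\lesssim \|(\sum_j|P_jf|^2)^{1/2}\|_{p,r}$, I would interpolate the reconstruction estimate $\|\sum_j P_j g_j\|_{p_i}\lesssim \|(\sum_j|g_j|^2)^{1/2}\|_{p_i}$ in the same way (or simply dualize, using that the dual of the square-function bound on $L^{p'}$, $r'$ gives the reconstruction bound on $L^p$, $r$, since $(L^{p,r})^* = L^{p',r'}$ for $1<p<\infty$); applied with $g_j = P_jf$ and using $P_j = \sum_{|i-j|\le 1}P_j\widetilde P_i$ with a fattened projection, this gives the claim.

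The only mild obstacle is bookkeeping rather than substance: one must make sure the interpolation is carried out at the level of the $\ell^2$-valued linear operator $f\mapsto (P_jf)_j$ (so that $S$ and its "inverse" are genuinely obtained from bounded linear maps between $L^{p_i}(\ell^2)$ spaces), and that one invokes the correct identification $(L^{p_0}(X),L^{p_1}(X))_{\theta,r}=L^{p,r}(X)$ for a fixed Banach space $X=\ell^2$, which holds for $1<p<\infty$. With that in place both inequalities follow immediately, and since the right-hand and left-hand bounds combine, we get $\|f\|_{p,r}\sim \|(\sum_j|P_jf|^2)^{1/2}\|_{p,r}$, completing the proof.
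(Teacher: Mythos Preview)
Your proposal is correct and follows essentially the same route as the paper: the upper bound is obtained from the classical $L^p$ Littlewood--Paley inequality via real interpolation, and the lower bound by duality (what the paper calls the ``polarization argument''). Your version is more explicit about carrying out the interpolation at the level of the $\ell^2$-valued linear map $f\mapsto (P_jf)_j$, but the underlying argument is identical.
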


The upper bound follows from the usual Littlewood-Paley inequality $\|(\sum |P_j f|^2)^\frac12\|_{p}\lesssim \|f\|_{p}$, $1<p<\infty$  and (real) interpolation. Once the upper bound is obtained, the lower bound can be shown by using the usual polarization argument.  For example, see \cite{St-singular} or \cite{kl} for detail.  

Hence,  in particular
\[\|\finv{\delta(Q\pm 1)\widehat f\,}\|_{q,\infty} \sim \Big\|\Big(\sum_{j}|\finv{\delta(Q\pm 1)\widehat {P_j f}\,}|^2\Big)^{1/2}\Big\|_{q,\infty}.\]
Since $q>2$, $L^{q/2,\infty}$ is normable. So, we have for $2<q<\infty$
 \begin{equation}\label{mink}\|(\sum_j |h_j|^2)^\frac12\|_{q,\infty}\lesssim (\sum_j \|h_j\|^2_{q,\infty})^\frac12. \end{equation}
Combining this with the above inequality gives
\begin{align*}
\|\finv{\delta(Q\pm 1)\widehat f\,}\|_{q,\infty}
&\lesssim
 \Big(\sum_{j}  \|\mathcal F^{-1}\big(\delta(Q\pm 1)\widehat{P_j f}\,\big)\|_{q,\infty}^2\Big)^{1/2}.
\end{align*}
We now use \eqref{decomjj} to get
\[\|\finv{\delta(Q\pm 1)\widehat f\,}\|_{q,\infty}  \lesssim \Big(\sum_{j} \|P_jf\|_{p,1}^2\Big)^{1/2}.\]
Since $\|g\|_{r,s}=\sup_{\|h\|_{r',s'} \le 1} | \int  g(x) h(x) dx| $ for $1\le r,s\le \infty$,
by the standard duality argument one can easily see that \eqref{mink} implies
$(\sum_j \|h_j\|^2_{p,1})^\frac12 \lesssim \|(\sum_j |h_j|^2)^\frac12\|_{p,1}$ if $1<p<2$. Hence we have
\[\|\finv{\delta(Q\pm 1)\widehat f\,}\|_{q,\infty}  \lesssim \Big\|(\sum_{j} |P_jf|^2)^{1/2}\Big\|_{p,1}.\]
Now Lemma \ref{little} gives \eqref{decomj}. Therefore we are reduced to showing \eqref{decomjj}.

\

Note that we may assume $2^j\ge 2^{-2}$ because $\finv{\delta(Q\pm 1)\widehat{P_j f}\,}=0$, otherwise. Let us set \[\mathbb A=\{\xi\in\mathbb{R}^d:1/2\le|\xi|\le2\}.\] Then by scaling, \eqref{decomjj} is equivalent to
\begin{equation}\label{decom0}
\|\mathcal F^{-1}\big(\delta(Q\pm 2^{-2j}) \widehat f\,\big)\|_{q,\infty}\le C2^{j(2-\frac dp+\frac dq)}\|f\|_{p,1}, \quad \supp \widehat{f}\subset \mathbb A.
\end{equation}

By finite decomposition of $\widehat f$, we may assume that $\widehat f$ is supported in a small neighborhood of {a point} $\xi_0 \in \mathbb A$.
For every invertible linear map $L$ defined on $\mathbb{R}^d$ with $|\det L|=1$, the change of variable $\xi\to L\xi$ in the frequency domain is harmless. Specifically, we apply a rotation $R=R_1\oplus R_2\in SO(\mathbb{R}^d)$, where $R_1\in SO(\mathbb{R}^k)$ and $R_2\in SO(\mathbb{R}^{d-k})$ by splitting  the variable $\xi=(\xi_1,\xi',\xi'',\xi_d)\in\mathbb{R}^k\times\mathbb{R}^{d-k}$ so that the support of $\widehat f$ is contained  in a small neighborhood (in $\mathbb{R}^d$) of  the intersection of
$
\{ \xi : Q(\xi)=\mp 2^{-2j},~ 1/2 \le |\xi|\le2, ~{\xi_1\ge 0, ~\xi_d\ge 0} \}
$
and the $\xi_1\xi_d$-plane.
Since $Q(R\xi)=Q(\xi)$, the surface $\{\xi :Q(\xi)=\mp 2^{-2j} \}$ and the measure $\delta(Q(\xi)\pm 2^{-2j}) d\xi=
\frac{d\sigma_{\mp2^{-2j}}}{|\nabla Q(\xi)|}$ are invariant under the rotation $R$.  We may assume that  the surface is given by
\begin{equation}\label{change}
\mp\,2^{-2j}=Q(\xi)=(\xi_d+\xi_1)(\xi_d-\xi_1)-\xi_2^2-\cdots -\xi_{k}^2+\xi_{k+1}^2+\cdots+\xi_{d-1}^2,
\end{equation}
and that $\widehat f$ is supported on the set \[\{ \xi \in \R^d:   |\xi_d+\xi_1|\sim 1,\, |\xi_d-\xi_1| \lesssim 1,\, |\xi'|\ll 1, \, |\xi''|\ll 1\}.  \]

Next we apply another harmless change of variables via the rotation $\xi\to \eta$, where
\begin{equation}\label{chvar}
    \def\arraystretch{1.5}
    \left\{
    \begin{array}{l}
    \eta_1=(\xi_d+\xi_1)/\sqrt{2}, \quad \eta_d=(\xi_d-\xi_1)/\sqrt{2}, \\
    {\eta'}%=(\eta_2,\cdots,\eta_k)=(\xi_2,\cdots ,\xi_k)
    ={\xi'},\
    {\eta''}%=(\eta_{k+1},\cdots ,\eta_{d-1})=(\xi_{k+1},\cdots ,\xi_{d-1})
    ={\xi''}.
    \end{array}
    \right.
\end{equation}
As mentioned in the introduction, for notational convenience we  write
$\tilde{\eta}=(\eta_1,{\eta'},{\eta''})=(\eta_1,\cdots,\eta_{d-1})\in\mathbb{R}^{d-1}$.
Then the surface given by \eqref{change} is now represented locally as the graph of $\mathcal G_{ \mp 2^{-2j}}$ in the new coordinate.
For the rest of this section we set
\[\rho=\mp 2^{-2j}.\]

Hence, by change of variables, \eqref{decom0} is again equivalent to the estimate
\begin{equation}\label{TT*3}
\Big\| \int  \delta({\eta_d-\mathcal G_\rho(\tilde{\eta})}) e^{2\pi ix\cdot \eta} \widehat{f}(\eta)\chi(\eta)d\eta\Big\|_{L^{q,\infty}(\mathbb{R}^d)} \le C |\rho|^{\frac{1}{2}(\frac{d}{p}-\frac{d}{q}-2)}\|f\|_{L^{p,1}(\mathbb{R}^d)},
\end{equation}
where $\chi$ is a smooth function supported in a set
$
\mathbb{A}_0:= \mathbb A\cap\{\eta\in\mathbb{R}^d : |{\eta'}|\ll1,~ |{\eta''}|\ll 1,  |\eta_d| \lesssim 1,~ \eta_1\sim 1 \}.
$
We now use Lemma \ref{del} to get
\begin{align*}
 \finv{ \delta({\eta_d-\mathcal G_\rho(\tilde{\eta})}) \widehat{f}(\eta)\chi(\eta)}
= \sum_{l\in\mathbb{Z}} T_l f,
\end{align*}
where $\psi$ is a smooth function with $\supp\widehat{\psi}\subset\{t\in\mathbb{R}:|t|\sim 1\}$ and
\[T_l f(x)= 2^{-l}\int_{\mathbb{R}^d}
       \psi(2^{-l}(\eta_d-\mathcal G_\rho(\tilde{\eta})))\chi(\eta)\widehat{f}(\eta)e^{2\pi ix\cdot\eta}d\eta . \]

\subsubsection{Restricted weak type estimate at $B$}\label{rest-b}
By Proposition \ref{tomst1} with $m=1$ we have, for $ 1\le p\le 2$ and $\frac{1}{q}=\frac{d-2}{d}(1-\frac{1}{p})$,
\begin{equation}\label{TS22}
\|{T}_l f\|_{q} \lesssim    2^{l(\frac{d-1}p-\frac{d}2)} \|f\|_p.
\end{equation}

Now we make use of  the following elementary lemma which was implicit in \cite{B}. A statement in more general setting can also be found in \cite{CSWaWr}.
\begin{lem}\label{interpol}
Let $\e_0, \e_1 > 0$, and let $\{T_l:l\in\Z\}$ be a sequence of linear operators satisfying
\[
\|T_l f\|_{q_0} \le M_0 2^{\e_0{l}}\|f\|_{p_0}, \quad \|T_l f\|_{q_1} \le M_ 1 2^{-\e_1{l}}\|f\|_{p_1}
\]
for some $1\le p_0, p_1, q_0, q_1\le \infty$. Then $T=\sum_{l\in \Z}T_l$ is bounded from $L^{p,1}$ to $L^{q,\infty}$ with
$
\|T f\|_{q,\infty} \le C M_0^\theta M_1^{1-\theta} \|f\|_{p,1},
$
where $\theta =\e_1/(\e_0+\e_1)$, $1/q=\theta/q_0+(1-\theta)/q_1$, $1/p = \theta / p_0 +(1-\theta) / p_1$.
\end{lem}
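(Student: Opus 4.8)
The plan is to reduce the statement to the restricted weak type bound
\[
\|T\chi_E\|_{q,\infty}\le C\,M_0^{\theta}M_1^{1-\theta}\,|E|^{1/p}
\]
for every measurable $E\subset\R^d$ of finite measure, and then to prove this by splitting the series $\sum_{l}T_l$ at a scale $N$ chosen in terms of $\alpha$ and $|E|$. The reduction is the standard one: for $q>1$ (the only case used here; when $q=1$ necessarily $q_0=q_1=1$ and a minor variant applies) the space $L^{q,\infty}$ is normable, so decomposing $f\in L^{p,1}$ — its positive and negative, real and imaginary parts — by the layer-cake formula as a superposition of characteristic functions with $\int_0^\infty|\{|f|>t\}|^{1/p}\,dt\lesssim\|f\|_{p,1}$ and applying Minkowski's integral inequality reduces matters to the displayed inequality.

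Fix $E$ with $|E|<\infty$, $\alpha>0$, and an integer $N$ to be chosen. Write $T\chi_E=G+H$ with $G=\sum_{l\le N}T_l\chi_E$ and $H=\sum_{l>N}T_l\chi_E$; since $\e_0,\e_1>0$, the triangle inequality in $L^{q_0}$ and $L^{q_1}$ together with summation of geometric series give
\[
\|G\|_{q_0}\lesssim M_0\,2^{\e_0 N}|E|^{1/p_0},\qquad \|H\|_{q_1}\lesssim M_1\,2^{-\e_1 N}|E|^{1/p_1},
\]
with constants depending only on $\e_0,\e_1$. By Chebyshev's inequality,
\[
\big|\{|T\chi_E|>\alpha\}\big|\;\lesssim\;\alpha^{-q_0}M_0^{q_0}2^{\e_0 q_0 N}|E|^{q_0/p_0}+\alpha^{-q_1}M_1^{q_1}2^{-\e_1 q_1 N}|E|^{q_1/p_1}
\]
(if $q_0$ or $q_1$ is $\infty$ one instead chooses $N$ so the corresponding super-level set is empty, which is simpler). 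Choosing $N$ to be the nearest integer to the real value balancing the two summands — the rounding costs only an $\e_0,\e_1$-dependent constant — yields
\[
\big|\{|T\chi_E|>\alpha\}\big|\;\lesssim\;\big(\alpha^{-q_0}M_0^{q_0}|E|^{q_0/p_0}\big)^{\mu}\big(\alpha^{-q_1}M_1^{q_1}|E|^{q_1/p_1}\big)^{1-\mu},\qquad \mu=\frac{\e_1 q_1}{\e_0 q_0+\e_1 q_1}.
\]

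It then remains only to check that the right-hand side equals a constant multiple of $\big(M_0^{\theta}M_1^{1-\theta}\alpha^{-1}|E|^{1/p}\big)^{q}$, which is exactly the asserted restricted weak type inequality. This is an algebraic identity: with $\theta=\e_1/(\e_0+\e_1)$ one gets $\mu q_0=\tfrac{\e_1 q_0q_1}{\e_0 q_0+\e_1 q_1}$ and $(1-\mu)q_1=\tfrac{\e_0 q_0q_1}{\e_0 q_0+\e_1 q_1}$, so $\mu q_0+(1-\mu)q_1=\tfrac{(\e_0+\e_1)q_0q_1}{\e_0q_0+\e_1q_1}=q$ — this being precisely $\tfrac1q=\tfrac{\theta}{q_0}+\tfrac{1-\theta}{q_1}$ rearranged — while $\mu q_0=\theta q$ and $(1-\mu)q_1=(1-\theta)q$, so the $|E|$-exponent on the right is $\tfrac{\mu q_0}{p_0}+\tfrac{(1-\mu)q_1}{p_1}=q\big(\tfrac{\theta}{p_0}+\tfrac{1-\theta}{p_1}\big)=q/p$. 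I expect this exponent bookkeeping — verifying that the scale-$N$ optimization automatically regenerates precisely the triple $(\theta,p,q)$ of the statement — to be the only real point requiring care; the rest of the argument is just the triangle inequality, a geometric sum, Chebyshev's inequality, and the routine normability fact used in the first reduction.
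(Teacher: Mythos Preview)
The paper does not actually prove this lemma; it merely states it as an elementary fact implicit in Bourgain \cite{B} (with a more general version in \cite{CSWaWr}), so there is no in-paper proof to compare against. Your argument is correct and is precisely the standard Bourgain summation trick: split $\sum_l T_l$ at a threshold $N$, sum the two geometric tails in $L^{q_0}$ and $L^{q_1}$ respectively, apply Chebyshev, and optimize $N$; the algebraic verification that the optimized exponents reproduce $(\theta,p,q)$ is carried out accurately, and the reduction from $L^{p,1}\to L^{q,\infty}$ to restricted weak type via the layer-cake decomposition and normability of $L^{q,\infty}$ is the right one.
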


We choose $(p_i, q_i)$ satisfying $\frac{1}{q_i} =\frac{d-2}{d}(1-\frac{1}{p_i})$ for $i=0,1$, and $\frac{1}{2}\le \frac{1}{p_0} < \frac{d}{2(d-1)} <\frac{1}{p_1} <1$. Then by \eqref{TS22} we have for $i=0,1,$
\[
\|T_l f\|_{q_i} \le C 2^{ l(\frac{d-1}{p_i}-\frac{d}{2})} \| f\|_{p_i}.
\]
Note that $\frac{d}{2}-\frac{d-1}{p_1}<0<\frac{d}{2}-\frac{d-1}{p_0}$. We combine two estimates for $i=0,1$  and  Lemma \ref{interpol} with $\e_0 = \frac{d}{2}-\frac{d-1}{p_0}$ and $\e_1=-\frac{d}{2}+\frac{d-1}{p_1}$ to get
\[\|\finv{ \delta({\eta_d-\mathcal G_\rho(\tilde{\eta})}) \widehat{f}(\eta)\chi(\eta)}\|_{q,\infty}\le C\|f\|_{p,1}\]
for $(1/p, 1/q)=B$. This is \eqref{TT*3} when $(1/p, 1/q)=B$.

\subsubsection{Restricted weak type estimate at  $C$}\label{rest-c}
Again by proposition \ref{tomst1} we have, for  $ 1\le p\le 2$ and $\frac{1}{q}=\frac{d-1}{d+1}(1-\frac{1}{p})$,
\begin{equation*}
\|{T}_l f\|_{q} \lesssim  |\rho|^{-\frac{1}2(\frac1p-\frac1q)}
  2^{l(\frac{d}p-\frac{d+1}2)} \|f\|_p.
 \end{equation*}
By the same argument as before we get the restricted weak type estimate
\[\|\finv{ \delta({\eta_d-\mathcal G_\rho(\tilde{\eta})}) \widehat{f}(\eta)\chi(\eta)}\|_{q,\infty}\lesssim |\rho|^{-\frac{1}2(\frac1p-\frac1q)} \|f\|_{p,1}\]
for $(1/p, 1/q)=C$. Finally note that $|\rho|^{-\frac{1}2(\frac1p-\frac1q)}= |\rho|^{\frac{1}{2}(\frac{d}{p}-\frac{d}{q}-2)}$ when $(1/p, 1/q)=C$. Hence we have \eqref{TT*3}  for $(1/p, 1/q)=C$.

\subsection{Estimate for $f\to \finv{\psi(2^{-l}(Q-a))\widehat f\,\,}$}  In this section we prove a few estimates which will be used later.

\begin{prop} \label{global-bound}
\label{global} Let $\lambda>0$, $ 0< |a|\lesssim 1$,  $\psi\in \cS(\mathbb R)$ with $\widehat \psi$ supported in $[-2,-1/2]\cup [1/2,2]$.  Then, if the support of Fourier transform of $f$ is contained in $\{ \xi: |\xi|\ge 1/2\}$,  for  $ 1< p\le 2$ and $\frac{1}{q}=\frac{d-1}{d+1}(1-\frac{1}{p})$, 
\begin{equation}\label{psi1}
\| \finv{\psi(\lambda^{-1}(Q-a))\widehat f\,\,}\|_{q} \lesssim  |a|^{-\frac{1}2(\frac1p-\frac1q)}
  \lambda^{\frac{d}p-\frac{d-1}2} \|f\|_p,
 \end{equation}
  and,  for  $ 1<p\le 2$ and $\frac{1}{q}=\frac{d-2}{d}(1-\frac{1}{p})$,
  \begin{equation}\label{psi2}
\|\finv{\psi(\lambda^{-1}(Q-a))\widehat f\,\,}\|_{q} \lesssim    \lambda^{\frac{d-1}p-\frac{d-2}2} \|f\|_p.
 \end{equation}
\end{prop}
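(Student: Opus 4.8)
The plan is to reduce Proposition~\ref{global-bound} to the single-scale estimates of Proposition~\ref{tomst1} by a Littlewood--Paley square function argument, much as in the proof of Proposition~\ref{restriction}. Since $\widehat f$ is supported in $\{|\xi|\ge1/2\}$, write $f=\sum_j P_jf$ (a sum that is effectively over $2^j\gtrsim1$) and put $h_j:=\finv{\psi(\lambda^{-1}(Q-a))\widehat{P_jf}}$, which has Fourier support in $\{|\xi|\sim2^j\}$. On each of the two lines in the statement $2<q<\infty$ and $1<p\le2$, so the Littlewood--Paley inequality and Minkowski's inequality in $\ell^2$ (valid since $q\ge2$) give
\[
\big\|\finv{\psi(\lambda^{-1}(Q-a))\widehat f}\big\|_q
\ \lesssim\ \Big\|\big(\textstyle\sum_j|h_j|^2\big)^{1/2}\Big\|_q
\ \lesssim\ \big(\textstyle\sum_j\|h_j\|_q^2\big)^{1/2},
\]
while Minkowski's inequality in the form $\big\|(\sum_j|g_j|^2)^{1/2}\big\|_p\ge\big(\sum_j\|g_j\|_p^2\big)^{1/2}$ (valid since $p\le2$) together with Littlewood--Paley give $\big(\sum_j\|P_jf\|_p^2\big)^{1/2}\lesssim\|f\|_p$. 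Hence it suffices to prove the \emph{uniform single-scale bound}
\[
\|h_j\|_q\ \lesssim\ |a|^{-\frac12(\frac1p-\frac1q)}\lambda^{\frac dp-\frac{d-1}2}\|P_jf\|_p
\quad\Big(\tfrac1q=\tfrac{d-1}{d+1}\big(1-\tfrac1p\big)\Big),
\]
and its analogue with $\lambda^{\frac{d-1}p-\frac{d-2}2}$ and no power of $|a|$ when $\frac1q=\frac{d-2}d(1-\frac1p)$, with constants independent of $j,\lambda,a$.

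For the single-scale bound I rescale $\{|\xi|\sim2^j\}$ to the unit annulus. With $\xi=2^j\zeta$ and $F_j(x):=P_jf(2^{-j}x)$, the homogeneity $Q(2^j\zeta)=2^{2j}Q(\zeta)$ gives $h_j(x)=\big[\finv{\psi(\mu_j^{-1}(Q-b_j))\widehat{F_j}}\big](2^jx)$ with $\mu_j:=2^{-2j}\lambda$ and $b_j:=2^{-2j}a$; here $\widehat{F_j}$ is supported in $\{|\zeta|\sim1\}$, $b_j\ne0$, and $|b_j|\lesssim|a|\lesssim1$ since $2^j\gtrsim1$. Now I apply exactly the reduction of Section~\ref{rest-ext}: a finite partition of unity on $\{|\zeta|\sim1\}$, rotations $R=R_1\oplus R_2\in SO(\mathbb R^k)\times SO(\mathbb R^{d-k})$, and dilations together with the change of variables \eqref{chvar} bring each piece into the normal form of Section~\ref{tdel}, in which $Q(\zeta)-b_j=2\eta_1(\eta_d-\mathcal G_{b_j}(\tilde\eta))$ over $\mathcal D$ and the operator becomes one of the $\T^{b_j}_{\mu_j}$ of \eqref{mainop} (with $m\sim\eta_1$, and $\mu_j$ replaced by a fixed positive multiple of itself). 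Proposition~\ref{tomst1} then gives $\|\T^{b_j}_{\mu_j}F_j\|_q\lesssim|b_j|^{-\frac12(\frac1p-\frac1q)}\mu_j^{\frac dp-\frac{d-1}2}\|F_j\|_p$ on the first line and the analogue of \eqref{TS2} on the second. Undoing the rescalings through $\|h_j\|_q=2^{-jd/q}\|\finv{\psi(\mu_j^{-1}(Q-b_j))\widehat{F_j}}\|_q$ and $\|F_j\|_p=2^{jd/p}\|P_jf\|_p$, and substituting $\mu_j=2^{-2j}\lambda$ and $b_j=2^{-2j}a$, one finds that the power of $2^j$ that is accumulated equals $(d-1)(1-\tfrac1p)-(d+1)\tfrac1q$ on the first line and $(d-2)(1-\tfrac1p)-d\,\tfrac1q$ on the second; both vanish \emph{exactly} on the respective lines, which is precisely the claimed single-scale bound.

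Putting these displays together yields \eqref{psi1} and \eqref{psi2}. I expect the main point --- and the reason one cannot simply sum the $h_j$ by the triangle inequality --- to be this cancellation of the powers of $2^j$: the single-scale bounds neither grow nor decay in $j$, so $\sum_j\|h_j\|_q$ diverges, and it is only the $\ell^2$-orthogonality of the Littlewood--Paley pieces (together with $p\le2\le q$) that recovers the estimate. The remaining technical care is to make the reduction to $\T^{b_j}_{\mu_j}$ uniform over the admissible parameters, including $b_j\to0$ and $\mu_j\ge1$; this is harmless because $\mathcal G_\rho$, and hence the whole symbol of $\T^\rho_\mu$, is smooth in $\rho$ uniformly for $|\rho|\lesssim1$, so for $\mu_j\ge1$ the operator is bounded from $L^p$ to $L^q$ with an $O(1)$ constant, which is dominated by the claimed powers since $\frac dp-\frac{d-1}2\ge0$ for $p\le2$ and $|b_j|\lesssim1$.
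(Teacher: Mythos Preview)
Your proposal is correct and follows essentially the same approach as the paper's proof: a Littlewood--Paley reduction (using $1<p\le2\le q<\infty$ and $\ell^2$--Minkowski) to a single dyadic shell, rescaling to the unit annulus, the rotation/change of variables \eqref{change}--\eqref{chvar} to the normal form $\psi\big(\mu_j^{-1}\,2\eta_1(\eta_d-\mathcal G_{b_j}(\tilde\eta))\big)$, and then Proposition~\ref{tomst1} with $m(\tilde\eta)=2\eta_1$; the paper also notes the same cancellation of the exponents $(d-1)(1-\tfrac1p)-\tfrac{d+1}{q}$ and $(d-2)(1-\tfrac1p)-\tfrac dq$ on the two lines. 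Your handling of the case $\mu_j\ge1$ and of $b_j\to0$ matches the paper's remark that for $\lambda\ge1$ the multiplier is smooth and uniformly bounded in $C^\infty$.
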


In order to show this, by Littlewood-Paley inequality and using the
fact that $1<p\le 2\le q <\infty$, it is sufficient to  obtain
\eqref{psi1}  and \eqref{psi2} for the same $p,q$ as in Proposition
\ref{global}  with  $f$ of which Fourier transform is supported $\{
\xi: 2^{j-1}\le |\xi|\le 2^{j+1}\}$, $j\ge -1$. The estimates for
each dyadic piece can be put together by the same argument as
before.
 By rescaling it is enough to do this with $f$ whose Fourier transform is supported in $\mathbb A$.
In fact, by rescaling ($\xi\to 2^j\xi$ in frequency domain) we have
\[ \finv{\psi(\lambda^{-1}(Q-a))\widehat f\,\,} (x)=\finv{\psi((2^{-2j}\lambda)^{-1}(Q-2^{-2j}a))\widehat {f(2^{-j}\cdot)}}(2^{j} x).\]
Since $\widehat {f(2^{-j}\cdot)}$ is supported in $\mathbb A$, we see that  \eqref{psi1}  and \eqref{psi2}  with $\widehat f$ supported in $\mathbb A$ implies
\begin{align*}
\| \finv{\psi(\lambda^{-1}(Q-a))\widehat f\,\,}\|_{q} &\lesssim 2^{j((d-1)(1-\frac1p)-\frac{d+1}q)}  |a|^{-\frac{1}2(\frac1p-\frac1q)}
  \lambda^{\frac{d}p-\frac{d-1}2} \|f\|_p,\\
  \|\finv{\psi(\lambda^{-1}(Q-a))\widehat f\,\,}\|_{q} &\lesssim   2^{j((d-2)(1-\frac1p)-\frac dq)} \lambda^{\frac{d-1}p-\frac{d-2}2} \|f\|_p,
\end{align*}
respectively,
provided that the Fourier transform of $f$ is supported in $\{ \xi: 2^{j-1}\le |\xi|\le 2^{j+1}\}$, $j\ge -1$. Therefore, for the proof of
Proposition \ref{global} it is sufficient to show \eqref{psi1} with $\widehat f$ supported in $\mathbb A$ and  $0<|a|\lesssim 1$ for  $ 1< p\le 2$ and $\frac{1}{q}=\frac{d-1}{d+1}(1-\frac{1}{p})$,  and \eqref{psi2}
for  $ 1<p\le 2$ and $\frac{1}{q}=\frac{d-2}{d}(1-\frac{1}{p})$.

Since $Q$ is non-elliptic, by finite decomposition of the support of $\widehat f$,  rotation and changing variables (\eqref{change}, \eqref{chvar}), to show \eqref{psi1} and \eqref{psi2}  with $\widehat f$ supported in $\mathbb A$,  it is sufficient to show the same bounds for   $\finv{\psi(\lambda^{-1}(2\eta_1(\eta_d-\mathcal G_a(\tilde\eta)))\tilde \chi \,\widehat f\,\,}$ instead of $\finv{\psi(\lambda^{-1}(Q-a))\widehat f\,\,}$ while $\widehat f$ is assumed to be supported in $\mathbb A _0$.  This can easily  be done
by repeating the proof of Proposition \ref{tomst1} by using Lemma \ref{Tomas-Stein} and Lemma \ref{kernel} with $m(\tilde\eta)=2\eta_1$.  This completes the proof.

\subsection{Bounds for the multiplier given by principal value} \label{pvpv} Let us consider  the estimate
\begin{equation}\label{pva}
\Big\|\F^{-1}\Big(  \operatorname{p.v.}\frac{1}{Q(\xi)\pm 1}  \widehat{f}(\xi) \Big) \Big\|_q \leq C\norm{f}_p.
\end{equation}
We now have another result similar to Proposition \ref{restriction}.
\begin{prop}\label{unipv} Let $d\ge3$ and let $Q$ be a non-elliptic  quadratic form as in Theorem \ref{mainthm}.  {Let $(1/p,1/q)$ be contained in $\mathfrak{T}$.}
Then there is a constant $C$ such that \eqref{pva} holds.  Additionally, if $(1/p,1/q)=B, C, C',$ and $B'$, then
we have $L^{p,1}$-$L^{q,\infty}$ estimate.
\end{prop}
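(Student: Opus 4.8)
The plan is to reduce the estimate for the principal-value multiplier to the estimates already established for the $\delta$-multiplier (Proposition \ref{restriction}) and for the smoothed-out pieces $f\mapsto \finv{\psi(\lambda^{-1}(Q-a))\widehat f}$ (Proposition \ref{global}). First I would use Lemma \ref{pv}: applied with $g(t)=(t\mp 1)^{-1}$-type test functions, it lets us decompose
\[
\operatorname{p.v.}\frac{1}{Q(\xi)\pm 1}=\sum_{j\in\Z}2^{-j}\psi\big(2^{-j}(Q(\xi)\pm 1)\big),
\]
where $\psi\in\cS(\R)$ is odd with $\widehat\psi$ supported in $[-2,-1/2]\cup[1/2,2]$. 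The key point of choosing this particular $\psi$ (rather than writing the principal value crudely as a limit of $1/(Q\pm1\mp i\epsilon)$) is that each dyadic piece now has the cancellation built in, so Lemma \ref{kernel} and hence Propositions \ref{tomst1} and \ref{global} apply verbatim. As in Section \ref{rest-ext}, after a Littlewood-Paley decomposition and rescaling we may assume $\widehat f$ is supported in $\mathbb A$, so the relevant scales are $2^{-j}\lesssim 1$ (the pieces with $2^{-j}\gtrsim 1$ give a smooth, uniformly bounded multiplier and are harmless).

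Next I would split the sum at the scale $2^{-j}\sim 1$, i.e. into $j\ge 0$ and $j<0$. For $j\ge 0$ the factor $2^{-j}(Q\pm 1)$ forces $Q\pm1$ to be of size $\sim 2^{j}\gtrsim 1$, away from the singular surface; here Proposition \ref{global} with $a=\mp 1$ and $\lambda=2^{-j}$ gives, for the two choices of $(p,q)$ on the lines through $B$ and through $C$, bounds of the form $\|T_jf\|_q\lesssim 2^{-j}\cdot 2^{-j(\frac dp-\frac{d-1}2)}\|f\|_p$ and $\|T_jf\|_q\lesssim 2^{-j}\cdot 2^{-j(\frac{d-1}p-\frac{d-2}2)}\|f\|_p$ respectively (the extra $2^{-j}$ coming from the coefficient in Lemma \ref{pv}). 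On the relevant ranges these exponents are summable in $j\ge 0$. For $j<0$, write $2^{-j}=2^{|j|}\ge 1$ and use instead the $\delta$-type bound: morally $\sum_{j<0}2^{-j}\psi(2^{-j}(Q\pm1))$ converges to a bounded multiplier plus $\delta(Q\pm1)$, so this part is controlled by Proposition \ref{restriction} together with a geometric series; more carefully, one applies Lemma \ref{interpol} (the $l$-summation interpolation lemma) to the family $\{T_j\}$ exactly as in Sections \ref{rest-b} and \ref{rest-c}, choosing exponents $(p_0,q_0)$ and $(p_1,q_1)$ on the appropriate critical line straddling $B$ (resp. $C$) so that one operator norm grows and the other decays in $j$. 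This yields the restricted weak type $L^{p,1}$-$L^{q,\infty}$ bound at the vertices $B$ and $C$; duality (the principal-value multiplier is, up to complex conjugation, essentially self-adjoint since $\overline{\operatorname{p.v.}\,1/(Q\pm1)}=\operatorname{p.v.}\,1/(Q\pm1)$) gives $B'$ and $C'$, and real interpolation fills in the interior of $\mathfrak T$.

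The main obstacle is bookkeeping the two-sided nature of the sum: unlike the $\delta$-multiplier, the principal value contributes pieces at \emph{all} scales $2^{-j}$, both large and small, and the large-scale pieces ($j\to-\infty$, where $2^{-j}(Q\pm1)$ is small) are precisely the ones encoding the singularity along $\Sigma_{\mp1}$. Thus one cannot simply quote Proposition \ref{global} uniformly; the $j<0$ tail must be handled by the same $TT^*$/interpolation machinery that produced Proposition \ref{restriction}, and one must check that the endpoint exponents at $B$ and $C$ are exactly the balance points where the geometric growth from the $2^{-j}$ coefficient is matched by the decay of the operator norms coming from Lemma \ref{kernel}. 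A secondary technical point is justifying the interchange of the sum in $j$ with $\mathcal F^{-1}$ and verifying that $\sum_j 2^{-j}\psi(2^{-j}(Q\pm1))$ really represents the principal-value distribution composed with $Q\pm1$ (using that $\pm1\ne 0$ so $Q\pm1$ has nonvanishing gradient on its zero set, as in the discussion around \eqref{measure}); this is routine given Lemma \ref{pv} but should be stated.
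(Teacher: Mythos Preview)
Your strategy is essentially the paper's own: decompose the principal value via Lemma \ref{pv}, bound each dyadic piece using the estimates of Proposition \ref{tomst1}/\ref{global}, sum at the vertices $B,C$ via Lemma \ref{interpol}, then dualize and interpolate. The only structural difference is ordering: the paper first carries out the Littlewood--Paley reduction and localization to graph coordinates (so Lemma \ref{pv} is applied to $\eta_d-\mathcal G_\rho(\tilde\eta)$ and one quotes Proposition \ref{tomst1}), whereas you apply Lemma \ref{pv} directly to $Q\pm1$ and invoke the packaged Proposition \ref{global}; both routes work.

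Two slips to clean up. First, matching $\psi(2^{-j}(Q\pm1))$ with the notation of Proposition \ref{global} gives $\lambda=2^{j}$, not $\lambda=2^{-j}$, so your displayed exponents for the $j\ge0$ pieces have the wrong sign (the conclusion of summability is nevertheless correct, since for $\lambda\ge1$ the multiplier is uniformly smooth and the bound is $O(2^{-j})$). Second, and more seriously, your parenthetical that ``the pieces with $2^{-j}\gtrsim 1$ give a smooth, uniformly bounded multiplier and are harmless'' is backwards: those are exactly the $j<0$ pieces, which are supported where $|Q\pm1|\sim 2^{j}\ll 1$, i.e.\ \emph{near} the singular surface, and it is for these that Lemma \ref{interpol} is genuinely needed (as you correctly do in the next paragraph). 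The heuristic that $\sum_{j<0}2^{-j}\psi(2^{-j}(Q\pm1))$ ``converges to $\delta(Q\pm1)$'' should also be dropped---it is a principal-value tail, not a delta---though the analysis is indeed parallel to that of Proposition \ref{restriction}.
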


This can be proved by the same argument which is used for the proof of Proposition \ref{restriction}.
So, we shall be brief.    The distribution $\operatorname{p.v.}\frac{1}{Q(\xi)\pm 1} $ is smooth on $|\xi |<3/4$ and bounded away from zero. So, we may assume $\widehat f$ is supported in $\{\xi: |\xi|\ge 1/2\}$.  As before, by Littlewood-Paley
theory and scaling  it is enough to show that, for $(1/p,1/q)=B, C,$ and $j\ge 0$
\begin{equation*}
\Big\|\F^{-1} \Big( \operatorname{p.v.}\frac{1}{Q(\xi)\pm 2^{-2j}} \widehat{f}(\xi)\Big) \Big\|_{q,\infty} \le C2^{j(2-\frac dp+\frac dq)}\|f\|_{p,1},
\quad \supp\widehat{f}\subset \mathbb A.
\end{equation*}

Let us set $\rho=\mp 2^{-2j}$ as before.  By finite decomposition, rotation, and change of variables \eqref{change} and \eqref{chvar}, this further reduces to showing the estimate
\begin{equation}\label{m1_2}
\Big\|\F^{-1} \Big( \operatorname{p.v.}\frac{1}{\eta_d-\mathcal G_\rho(\tilde{\eta})} \tilde \chi(\tilde \eta)\widehat{f}(\eta)\Big) \Big\|_{q,\infty}\le C{|\rho|^{\frac{1}{2}(\frac{d}{p}-\frac{d}{q}-2)} }\|f\|_{p,1}, \quad \supp\widehat{f}\subset \mathbb A_0,
\end{equation}
where  $\tilde \chi$ is smooth function supported in $\mathcal D$. Now, by Lemma \ref{pv}, we decompose this operator as
\[\F^{-1} \Big( \operatorname{p.v.}\frac{1}{\eta_d-\mathcal G_\rho(\tilde{\eta})} \tilde\chi(\tilde\eta)\widehat{f}(\eta)\Big)=\sum_{l\in\mathbb{Z}} 2^{-l}\int_{\mathbb{R}^d}\psi(2^{-l}(\eta_d-\mathcal G_\rho(\tilde{\eta}))) \tilde\chi(\tilde\eta) \widehat{f}(\eta) e^{2\pi ix\cdot\eta} d\eta,
\]
where $\psi$ is a smooth function on $\mathbb{R}$ such that $\supp\widehat{\psi}\subset\{t\in\mathbb{R}:|t|\sim 1\}$. At this point we remark that the exactly same argument as in Section \ref{rest-ext} can be applied to show \eqref{m1_2} for
$(1/p, 1/q)=B$, or $C$.  So we avoid duplication.

\subsection{Necessary conditions }\label{nec-sec}
In this section we obtain necessary conditions for the estimates \eqref{Sob},
\eqref{resol}, \eqref{TT*0}. By the implication  \eqref{Sob} $\rightarrow$ \eqref{resol} $\rightarrow$ \eqref{TT*0} it is sufficient to consider
\eqref{TT*0}.

\subsubsection{Failure of \eqref{TT*0} for $\frac1p-\frac1q>\frac2d$}
After change of variables \eqref{change}, \eqref{chvar}, the quadratic form $Q$ is replaced by $2\eta_1\eta_d -|\eta'|^2+|\eta''|^2$.  By  \eqref{measure}  it follows that the estimate \eqref{TT*0} implies
\begin{equation}\label{modi}
\Big\| \int e^{2\pi i(\tilde{x}\cdot\tilde{\eta}+x_d
\mathcal G_{\pm 1}(\tilde{\eta}))} \widehat{g}(\tilde{\eta},\mathcal G_{\pm 1}(\tilde{\eta})) \frac{1}{2\eta_1}d\tilde{\eta}\Big\|_{L^q(\R^d)}\le C\|g\|_{L^p(\R^d)},
\end{equation}
whenever $\widehat{g}$ vanishes near $\eta_1=0.$
Let $\phi\in C_c^\infty(\R)$ be supported on the interval $[-2^{-2},2^{-2}]$. For $0<\la\ll1$, define $g_\la \in \mathcal{S}(\R^d)$ by
\begin{equation}\label{gla}
\widehat{g_\la}(\eta)=\phi(\la^2(\eta_1-\la^{-2})) \phi(\eta_d)\prod_{j=2}^{d-1}\phi(\la\eta_j).
\end{equation}
Since $\widehat{g_\la}$ is supported in the $d$-dimensional rectangle
\[
R_\la=\{\eta\in\R^d: |\eta_1-\la^{-2}|\le (2\la)^{-2},~|\eta_d|\le 2^{-2},~|\eta_j|\le ({4\la})^{-1},~2\le j\le d-1\},
\]
it is easy to see  that
$ \big| \int  e^{2\pi i(\tilde{x}\cdot\tilde{\eta}+x_d\mathcal G_{\pm 1}(\tilde{\eta}))} \widehat{g_\la}(\tilde{\eta},\mathcal G_{\pm 1}(\tilde{\eta}))\frac{1}{2\eta_1} d\tilde{\eta} \big| \gtrsim \la^2 |{R_\la}|$
if $x$ is in the set
\[
R_\la'=\big\{x\in\R^d : |x_1|\le  \frac{ \la^2}{125d} ,~ |x_d|\le \frac{1}{20d} ,~ |x_j|\le\frac{\la}{25d},~ 2\le j\le d-1\big\}.
\]
Hence, we have
\[
\Big\| \int e^{2\pi i(\tilde{x}\cdot\tilde{\eta}+x_d\mathcal G_{\pm 1}(\tilde{\eta}))}
\widehat{g_\la}(\tilde{\eta},\mathcal G_{\pm 1}(\tilde{\eta}))\frac{1}{2\eta_1}d\tilde{\eta} \Big\|_{L^q(R_\la')}
\gtrsim \la^2 |{R_\la}| |R_\la'|^{1/q}\sim \la^{2-d+d/q}.
\]
On the other hand it is also clear that
$
\|g_\la\|_p\lesssim\la^{-d+d/p}$. Therefore, \eqref{modi} gives $ \la^{2-d+d/q}\lesssim \la^{-d+d/p}$. By letting {$\la\to 0$} we see that  the inequality \eqref{TT*0} cannot be true unless $1/p-1/q\le2/d$.

\subsubsection{Failure of \eqref{TT*0} for $\frac1p-\frac1q< \frac2{d+1}$, $q {~ \le~ }\frac{2d}{d-1} $, $p{ ~\ge ~} \frac{2d} {d+1}$ }
Failure of \eqref{TT*0} on this range can be shown similarly as in the proof of Bochner-Riesz means of negative order  (see \cite{Bo}  \cite{CKLS}).  Here we only consider the case  $\delta(Q-1)$. The other case $\delta(Q+1)$ can be shown via a little modification.

Typical Knapp's example shows
the estimate \eqref{TT*0} is only possible for
\begin{equation}\label{necc} \frac1p-\frac1q\ge \frac2{d+1}.  \end{equation}
In fact, for $0<\la \ll 1$,
{let us define $\widehat{f_\la}(\xi)=\phi(\la^{-2}(\xi_d-1))\prod_{j=1}^{d-1}\phi(\la^{-1}\xi_j)$,
where $\phi\in C^\infty_c(\R)$ is the same function as in \eqref{gla}.
Then it is easy to see
that
\[\Big|\int \delta(Q(\xi)-1) e^{2\pi ix\cdot \xi} \widehat{f_\la}(\xi) d\xi \Big| \gtrsim  \la^{d-1}\]
for $|x_d|\le c\la^{-2}$ and $|x_j|\le c\la^{-1},\, j\neq d$ with a sufficiently small $c>0$.
The estimate \eqref{TT*0} implies $\la^{d-1}\la^{-\frac{d+1}q}\lesssim \la^{d+1-\frac{d+1}{p}} $.}
Letting $\la\to 0$ gives the condition \eqref{necc}.

The surface $\{\xi: Q(\xi)=  1,\  |\xi|\le 2\}$ has nonvanishing
Gaussian curvature.  If we choose a function $f$ with $\widehat f$
supported in a small enough neighborhood of $(0,\pm 1)\in\mathbb
R^{d-1}\times \mathbb R$, then by the stationary phase method  we
see
\[\Big|\int \delta(Q(\xi) \, - \,1) e^{2\pi ix\cdot \xi} \widehat{f}(\xi) d\xi \Big|\gtrsim |{x_d}|^\frac{1-d}2\]
if $c|x_d|>|\tilde x|$. The estimate \eqref{TT*0} implies that $|{x_d}|^\frac{1-d}2\chi_{\{c|x_d|>|\tilde x|\}}\in L^q$.  Hence, it follows that
$\frac{(d-1)q}2>d$ and the estimate \eqref{TT*0} can not be true for $q {~ \le~ }\frac{2d}{d-1} $.
Duality gives  the other condition $p{ ~\ge ~} \frac{2d} {d+1}$.

\subsubsection{Necessity of the  condition $p<2(d-1)/d, ~ q>2(d-1)/(d-2)$ for \eqref{TT*0} when $1/p-1/q=2/d$} \label{nec-scaling}
It is enough to show
$q > 2(d-1)/(d-2)$ because of duality.

Let $m_1< m_2$ be positive numbers. From scaling,  we see that \eqref{TT*0} implies for  any $r>0$
\begin{equation*}
\Big\| \int\delta( Q(\xi)-r^2) e^{2\pi ix\cdot \xi} \wh{f}(\xi)d\xi\Big\|_{L^q(\mathbb{R}^d)}
\le C \|f\|_{L^p(\mathbb{R}^d)}
\end{equation*}
with $C$ independent of $m_1, m_2$ if  $\supp \wh{f}\subset \{\xi: m_1\le |\xi|\le m_2 \}$ and $1/p-1/q=2/d$.  Letting $r\to 0$ gives, for   $\supp \wh{f}\subset \{\xi:  m_1\le |\xi|\le m_2 \}$,
\begin{equation}\label{qzero}
\Big\| \int \delta(Q(\xi)) e^{2\pi ix\cdot \xi} \wh{f} (\xi)d\xi\Big\|_{L^q(\mathbb{R}^d)}
\le C \|f\|_{L^p(\mathbb{R}^d)}.
\end{equation}

  Then parameterizing the set $\{\xi: Q(\xi)=0\}$, in particular,  we see that
\eqref{qzero} implies
\begin{equation}\label{eq:cone}
\Big\| \int_{\R^{d-1}} \wh{g} \Big( \tilde{\eta}, \frac{|{\eta'}|^2-|{\eta''}|^2}{2\eta_1}\Big ) e^{2\pi i (\tilde{x}\cdot\tilde{\eta}+x_d\frac{|{\eta'}|^2-|{\eta''}|^2}{2\eta_1})} \tilde{\chi}(\tilde{\eta})  d\tilde{\eta} \Big\|_q \le C \|g\|_p
\end{equation}
whenever $\tilde \chi$ is a smooth function  supported in $\{\tilde \eta: \eta_1\in (10^{-2}, 10^2),  |(\eta', \eta'')|\le M \} $ for any $M>0$.  Here we use the same coordinates  given  by \eqref{chvar}.
Let $\phi \in C^{\infty}_c(-10^{-2},10^{-2})$ be satisfying
\[ 0\le \wh{\phi}(t) \le 2 ~\text{for ~ all }~t \in\R \quad\text{and} \quad \wh{\phi}(t) \ge 1~ \text{on}~ |t|\le 1 .\]
Also, let $\phi_2 \in C^{\infty}_c(\R^{k-1})$ and $\phi_3 \in C^{\infty}_c(\R^{d-k-1})$ be radial functions which are supported in the balls $B(0,M/2)$ and have nonnegative Fourier transforms.
We set
\[\phi_1(t) = t^{\frac{-d+2}{2}}\phi(t-1), \  \tilde{\chi}(\tilde{\eta})=\phi_1(\eta_1)\phi_2({\eta'})\phi_3({\eta''}).\]
From the support condition, the inequality \eqref{eq:cone} holds for such $\tilde{\chi}$.
Choose $g\in \mathcal{S}(\R^d)$ such that  $|\wh{g} \Big( \tilde{\eta}, \frac{|{\eta'}|^2-|{\eta''}|^2}{2\eta_1}\Big )|\equiv 1$ if $\tilde\eta\in \supp \tilde \chi $. Then \eqref{eq:cone} implies that
\[K(x) = \int_{\R^{d-1}} e^{2\pi i (\tilde{x}\cdot\tilde{\eta}+x_d\frac{|{\eta'}|^2-|{\eta''}|^2}{2\eta_1})} \phi_1(\eta_1)\phi_2({\eta'})\phi_3({\eta''}) d\tilde{\eta}\]
is in  $L^q(\R^d)$.

We now compute $K$.  By making use of the Fourier transform of the Gaussian function, we obtain,
for $x_d\neq 0$,
\begin{align*}
K(x)&=|x_d|^{-\frac{d-2}{2}}e^{\pi i \frac{-d+2k}{4}} \int\int e^{2\pi i \eta_1(x_1 -\frac{|y|^2-|z|^2}{2x_d})}
\eta_1^{\frac{d-2}{2}}\phi_1(\eta_1) d\eta_1 \wh{\phi_2}({y}-{x'})\wh{\phi_3}({z}-{x''}) dydz
\\
&= |x_d|^{-\frac{d-2}{2}}e^{2 \pi i( \frac{-d+2k}{8} +x_1-\frac{|{x'}|^2-|{x''}|^2}{2x_d} )}  \times I(x),
\end{align*}
where
\[I(x)=\int   \wh{\phi_2}(y)\wh{\phi_3}(z)\wh{\phi}(-x_1 +\frac{|y+{x'}|^2-|z+{x''}|^2}{2x_d})
 e^{-2\pi i ( \frac{2{x'}\cdot y-2{x''}\cdot z+|y|^2-|z|^2}{2x_d})}dydz.
\]

Let us set $B=\iint \wh{\phi_2}(y)\wh{\phi_3}(z) dydz$. Then let $\lambda$ be a number large enough such that $\iint_{|(y,z)|\ge \lambda}  \wh{\phi_2}(y)\wh{\phi_3}(z) dydz\le 10^{-2}B.$
Note that, if $|(y,z)|\le \lambda$,
$ |x_1-\frac{|x'|^2-|x''|^2}{2 x_d}|\le 1/2$, $|x'|, |x''|\le  10^{-3}\lambda^{-2}|x_d|$ and { $|x_d| \ge 10^{3} \lambda^2 $},
then
$|-x_1 +\frac{|y+{x'}|^2-|z+{x''}|^2}{2x_d}|\le 1$  and $
|  \frac{2{x'}\cdot y-2{x''}\cdot z+|y|^2-|z|^2}{2x_d}|\le 10^{-2}.$
Hence,  by the choice of $\lambda$
\[ \Big| I(x)-\int_{|(y,z)|\le \lambda} \wh{\phi_2}(y)\wh{\phi_3}(z)\wh{\phi}(-x_1 +\frac{|y+{x'}|^2-|z+{x''}|^2}{2x_d})
dydz\Big |
\le 10^{-1}B\]
provided that $x$ is in the set
\begin{equation*}
U_\lambda=\Big\{x: |x_1-\frac{|x'|^2-|x''|^2}{2 x_d}|\le 1/2, \  |x'|, |x''|\le  10^{-3}\lambda^{-2}x_d,\ {x_d\ge 10^{3}\lambda^{2}}\Big\}.
\end{equation*}
Hence, if $x\in U_\lambda$,
$|I(x)| \ge \frac12 B.$
Therefore, we see that if $x\in U_\lambda$
\[
 |K(x)|\gtrsim B |x_d|^{-\frac{d-2}2}.\] Using this
\begin{align*}
 \int   |K(x)|^q &\gtrsim \int  (\int \int  \chi_{U_\lambda}(x) |x_d|^{-\frac{q(d-2)}2}  dx' dx'' dx_1) dx_d \\
 & \sim  \int_{10^3\lambda^2}^\infty  |x_d|^{-\frac{q(d-2)}2} x_d^{d-1} x_d^{-1} dx_d.  \end{align*}
The last integral must be finite since $K\in L^q$. Hence we get $q > 2(d-1)/(d-2)$ as desired.

\section{Proofs of Theorem \ref{uniSob1} and Theorem \ref{mainthm}}

\subsection{Proof of Theorem  \ref{mainthm}}  The necessity part follows from the scaling condition \eqref{scaling} and  the condition in the subsection \ref{nec-scaling}.  For the proof of the sufficiency part of Theorem \ref{mainthm} by duality and interpolation it is sufficient to show the restricted weak type  bound
\eqref{wk} for  $(p,q)=
\big(\frac{2(d-1)}{d},\frac{2d(d-1)}{(d-2)^2}\big) $. 
By scaling, limiting argument and Lorentz transformation,  to show \eqref{wk} it is enough to show the following (see \cite[Proposition 2.1]{KRS}):

If $\alpha\in\mathbb{R}\setminus\{0\}$, $\beta\in\mathbb{R}$, $\lambda=\pm1$ and $l=d$ (or $l=1$), there exists a uniform constant $C$ such that, for  $(p,q)= \big(\frac{2(d-1)}{d},\frac{2d(d-1)}{(d-2)^2}\big) $,
\begin{equation}\label{reduction1}
\|u\|_{q,\infty}\le C\Big\|\Big(Q(D)+\alpha\Big(\frac{\partial}{\partial x_l}+i\beta\Big)+\lambda \Big)u \Big\|_{p,1}, \quad u\in\mathcal{S}(\mathbb{R}^d).
\end{equation}
Even though  $L^{p,1},$ $L^{q,\infty}$ are used here instead of $L^p,$ $L^q$, the reduction can be justified without modification
by following the argument \cite{KRS}   because $L^{p,1}$ and $L^{q,\infty}$ are normable.

Further reduction is possible by following the argument in  \cite[pp.335--337]{KRS}. In fact,  we make use of Littlewood-Paley inequality (projections in $x_d$) in Lorentz spaces (cf. Lemma \ref{little}) and Proposition \ref{restriction} which gives the restricted weak type estimate for $(p,q)=
\big(\frac{2(d-1)}{d},\frac{2d(d-1)}{(d-2)^2}\big) $. One may repeat the same argument by replacing $L^p$, $L^q$ with $L^{p,1}$, $L^{q,\infty}$. This reduction works well because of the scaling condition $1/p-1/q=2/d$. So,   in order to prove the estimates \eqref{reduction1} it is enough to show that, for $(p,q)=
\big(\frac{2(d-1)}{d},\frac{2d(d-1)}{(d-2)^2}\big) $,
\[\|u\|_{L^{q,\infty}(\mathbb{R}^d)}\le C\|(Q(D)+z)u\|_{L^{p, 1}(\mathbb{R}^d)},\quad z\in\mathbb{C}. \]
Thanks to the scaling condition  \eqref{scaling},  by scaling we only need to show the above estimate for $|z|\ge 1$.    Therefore, it is enough to show Theorem \ref{uniSob1}.  This is done in the  following section.

\medskip

\subsection{Proof of Theorem \ref{uniSob1}} As before, by duality and interpolation it is enough to show, for $(1/p,1/q)=B,$ $C$,
\[
\|u\|_{L^{q,\infty}(\mathbb{R}^d)}\le C\|(Q(D)+z)u\|_{L^{p,1}(\mathbb{R}^d)},\quad |z|\ge1.
\]
Writing $z=a+ib$, this reduces to showing
\begin{equation}\label{local0}
\Big\|\F^{-1}\big(({Q+a+ib})^{-1} {\widehat{f}}\,\,\big) \Big\|_{q,\infty} \leq C\norm{f}_{p, 1}
\end{equation}
which is uniform in $a, b,$ provided $ a^2+b^2=1$. In fact, by scaling \eqref{local0} implies
\[\Big\| \F^{-1}\big(({Q+z})^{-1} {\widehat{f}}\,\, \big) \Big\|_{q,\infty} \leq C |z|^{\frac12(\frac dp-\frac dq -2)}\norm{f}_{p,1}, \quad z\in \C .\]
Since $\frac dp-\frac dq -2\le 0$,  the desired estimate   follows for $(1/p,1/q)=B,$ $C$.  Hence this proves Theorem \ref{uniSob1}.
For the rest of this section we fix $p, q$  so that
\[ \Big(\frac1p,\frac1q\Big)=B,~ C.\]

 Let $\psi$ be a smooth function which is supported in $\{\xi: |\xi|\le 3/4\}$. Since $|a+ib|=1$, $({Q+a+ib})^{-1} \psi$ is a smooth function uniformly contained in $C^\infty$. Thus the multiplier operator $f\to \finv{({Q+a+ib})^{-1} \psi {\widehat{f}\,\,}}$  is uniformly bounded from $L^p$ to $L^q$ for $1\le p\le q\le \infty$. So, for the proof of  \eqref{local0} we may assume
\begin{equation}
\label{supp-con}
\supp \widehat f\subset \big\{\xi: |\xi|\ge \frac12\big\}.
\end{equation}  We separately consider the real and imaginary parts of the multiplier.   Let us write
\begin{equation}\label{re+im}
\frac{1}{Q(\xi)+a+ib} = \frac{Q(\xi)+a}{(Q(\xi)+a)^2+b^2} - \frac{ib}{(Q(\xi)+a)^2+b^2}.
\end{equation}
We also need to use the generalized polar coordinate which is given by the quadratic form $Q$.    Let us set $\Sigma_{\pm}=\{\xi : Q(\xi)=\pm 1\}$ and let
$d\sigma_\pm$ be the measure induced by the distribution  $\delta(Q\mp 1)$ on the surface $\Sigma_{\pm }$. It is well-known that
\begin{equation*}
d\xi=\sum_{\pm}\rho^{d-1}d\rho d\sigma_\pm(\theta),
\end{equation*}
where $\xi=\rho\theta$, $\rho>0$, $\theta\in\Sigma_{\pm}$.

\subsubsection{Imaginary part} First we deal with the imaginary part, which is relatively simpler. Note that
\[
\F^{-1}\Big( \frac{b\widehat{f}}{(Q+a)^2+b^2} \Big)(x) = \sum_{\pm} \int_0^\infty \int_{\Sigma_{\pm }} \frac{b\widehat{f}(\rho \theta)e^{2\pi i x\cdot \rho\theta}}{(\pm \rho^2+a)^2+b^2} d\sigma_{\pm}(\theta) \rho^{d-1}d\rho.
\]
By Minkowski's inequality, scaling, and by Proposition \ref{restriction}, it follows
\begin{align*}
\Big\| \F^{-1}\Big( \frac{b\widehat{f}}{(Q+a)^2+b^2} \Big) \Big\|_{q,\infty}
&\lesssim \sum_{\pm} \int_0^\infty \Big\| \int_{\Sigma_{\pm }} \frac{b\widehat{f}(\rho \theta)e^{2\pi i x\cdot \theta}}{(\pm \rho^2+a)^2+b^2} d\sigma_{\pm}(\theta) \Big\|_{q,\infty} \rho^{-\frac dq+d-1}d\rho \\
&\lesssim \|f\|_{p,1} \int_0^\infty \frac{|b|\rho \rho^{\frac dp-\frac dq-2}}{(\rho^2-|a|)^2+b^2} d\rho .
\end{align*}
Here we use the fact that $L^{q,\infty}$ is normable.
Hence, it is sufficient to show that
\begin{equation}\label{integral}
\int_0^\infty \frac{|b| \rho^{-\sigma}}{(\rho-|a|)^2+b^2} d\rho \le C
\end{equation}
with $C$ independent of $a, b$ when $a^2+b^2=1$, where  \[\sigma=1-\frac d{2p}+\frac d{2q}.\]
So, $0\le \sigma\le \frac{1}{d+1}.$ To show \eqref{integral}  we consider the cases $10^{-2} |a|\le  |b|\le 10^{2} |a| $, $|a|< 10^{-2} |b|$, and $|a|> 10^2 |b|$, separately.
The first two cases are easy to check.  For the last case, splitting
\[\int_0^\infty \frac{|b| \rho^{-\sigma}}{(\rho-|a|)^2+b^2} d\rho=
\Big(\int_{0}^{|a|-|b|} + \int_{|a|-|b|}^{|a|+|b|} + \int_{|a|+|b|}^\infty\Big) \frac{|b| \rho^{-\sigma}}{(\rho-|a|)^2+b^2} d\rho\]
and using $|a|\sim 1\gg |b|$, it is not difficult to see the three integrals are uniformly bounded.

\subsubsection{Real part} For the real part we  show 
\begin{equation}\label{ab}
\Big\|\F^{-1}\Big( \frac{(Q+a)\wh{f}}{(Q+a)^2+b^2} \Big) \Big\|_{q,\infty} \leq C\norm{f}_{p,1},
\end{equation}
 uniformly in $a,b \in \R$ provided $ a^2+b^2=1$.
As in \cite{KRS} by a density argument  we may assume  $b\neq 0$. In fact, the case $b=0$ in which
 \eqref{ab} is understood as $\big\|\F^{-1}(\operatorname{p. v.} {(Q\pm 1)^{-1}}{\wh{f}}\, ) \big\|_{q,\infty} \leq C\norm{f}_{p,1}$  is already handled in Section \ref{pvpv}.

We start by decomposing the multiplier.  We make  use of the particular functions $\varphi$, $\psi$ which are constructed in Lemma \ref{pv} so that
 \begin{equation*}
 1=\sum_{l=-\infty}^{\infty}\varphi (2^{-l}x)=\sum_{l=-\infty}^{\infty} 2^{-l} x\psi (2^{-l}x).
 \end{equation*}

\smallskip
Let $ l_0$ be the number such that $2^{l_0-1}< |b|\le 2^{l_0}$.   Let us set
\begin{align*}
A_l&=\frac{Q(\xi)+a}{(Q(\xi)+a)^2+b^2} \varphi (2^{-l} (Q(\xi)+a )),\\
B_l&=\Big(\frac{Q(\xi)+a}{(Q(\xi)+a)^2+b^2}- \frac{1}{Q(\xi)+a}\Big)\varphi (2^{-l} (Q(\xi)+a )),\\
C_l&= \frac{1}{Q(\xi)+a}\varphi (2^{-l} (Q(\xi)+a )).
\end{align*}
This gives a decomposition of  the multiplier as follows;
{\begin{align}\label{decompABC}
&~ \qquad \frac{Q(\xi)+a}{(Q(\xi)+a)^2+b^2}
=\sum_{l<l_0} A_l+ \sum_{l\ge l_0}  B_l+ \sum_{l\ge l_0}  C_l.
\end{align} }

Now, in order to prove \eqref{ab}   we consider the two cases $(1/p,1/q)=B$ and $(1/p,1/q)=C$, separately.\\

%\subsubsection*{Proof of \eqref{ab} for $(1/p,1/q)=B$}
\noindent\textit{Proof of \eqref{ab} for $(1/p,1/q)=B$.}
The operator which corresponds to $ \sum_{l\ge l_0}  C_l$ can be handled by the exactly same argument as in the section 3.1. 
Note that $C_l= 2^{-l} \psi( 2^{-l} (Q(\xi)+a))$ and $\supp \widehat \psi\subset \{t: 1/2\le |t|\le 2\}$
and  recall that we are assuming \eqref{supp-con}.  Hence,
one may repeat the argument in the subsections \ref{rest-b}, \ref{rest-c} by making use of  the bounds  for $f\to \finv{C_l\widehat f\,\,}$  in Proposition  \ref{global-bound} (\eqref{psi2}) and Lemma \ref{interpol}  to get, for $(1/p, 1/q)=B$,
\begin{equation}\label{sumcl}\Big\| \finv{\sum_{l\ge l_0}   C_l \widehat f\,\,}\Big\|_{q,\infty} \lesssim \|f\|_{p,1}.\end{equation}

The boundedness of multiplier operators given by  $\sum_{l<l_0} A_l$ and $\sum_{l\ge l_0}  B_l$ can be shown  by the similar argument for
the imaginary part in \eqref{re+im}.   We first handle the operator given by  $\sum_{l\ge l_0}  B_l$. Note that
\[B_l= \frac{-b^2}{(Q(\xi)+a)^2+b^2} 2^{-l}\psi(2^{-l}( Q(\xi)+a)).\]  Since  $\psi$ is bounded,
using the generalized polar coordinates and Minkowski's inequality as before,  we have
\begin{align*}
&
~ \quad \Big\| \int_{\R^d} B_l(\xi) \widehat{f}(\xi)e^{2\pi i x\cdot \xi} d\xi \Big\|_{q,\infty} \\
&
\lesssim  2^{-l} b^2\sum_{\pm} \int_0^\infty \Big\| \int_{\Sigma_{\pm}}
 \frac{\psi(2^{-l}(\pm \rho^2+a))}{(\pm \rho^2+a)^2+b^2} \widehat{f}(\rho \theta)
 e^{2\pi i x\cdot \theta} d\sigma_{\pm}(\theta) \Big\|_{q,\infty} \rho^{-\frac dq+d-1}d\rho
\\
&\lesssim 2^{-l} b^2\|f\|_{p,1}    \sum_{\pm} \int_0^\infty
\frac{ \rho\rho^{\frac dp-\frac dq-2}  }{(\pm \rho^2+a)^2+b^2} d\rho.
\end{align*}
Note that ${\frac dp-\frac dq-2} =0$ because $(1/p,1/q)=B$.  Recalling $2^l\ge |b|$ and taking the summation over $l$, we have
\begin{equation}\label{sumbl}
\Big\| \int_{\R^d} \sum_{l\ge l_0} B_l(\xi) \widehat{f}(\xi)e^{2\pi i x\cdot \xi} d\xi \Big\|_{q,\infty}
\lesssim  \|f\|_{p,1} \int_0^\infty   \frac{ |b|\rho}{( \rho^2-|a|)^2+b^2} d\rho .
\end{equation}
This gives the desired uniform bound because the last integral is bounded uniformly in $a, b$ when $a^2+b^2=1$ (see \eqref{integral}).

\medskip

Now we consider the part given by $\sum_{l<l_0} A_l$.  Similarly,
\begin{align*}
&~ \quad \Big\| \int_{\R^d}  A_l(\xi) \widehat{f}(\xi)e^{2\pi i x\cdot \xi} d\xi \Big\|_{q,\infty} \\
&
\lesssim \sum_{\pm} \int_0^\infty \Big\| \int_{\Sigma_{\pm}} \frac{ (\pm \rho^2+a)
\varphi(2^{-l}(\pm \rho^2+a))}{(\pm \rho^2+a)^2+b^2}  \widehat{f}(\rho \theta)e^{2\pi i x\cdot \theta}
d\sigma_{\pm}(\theta) \Big\|_{q,\infty} \rho^{-\frac dq+d-1}d\rho
\\
&\lesssim \|f\|_{p,1}    \sum_{\pm} \int_0^\infty  \frac{ \rho \rho^{\frac dp-\frac dq-2}  |\pm \rho^2+a| |\varphi(2^{-l}(\pm \rho^2+a))| }{(\pm \rho^2+a)^2+b^2} d\rho.
\end{align*}
Since
$ |\varphi(\rho)|\lesssim (1+\rho)^{-M},$    $\sum_{l<l_0}  |t\,\varphi(2^{-l} t)|\lesssim  \sum_{l<l_0}  2^l$. So, we have   $\sum_{l<l_0}  |t\,\varphi(2^{-l} t)|\lesssim |b|$ because $2^{l_0}\lesssim |b|$.  Now, taking summation over $l$, we get
\begin{equation}\label{sumal}
\Big\| \int_{\R^d}  \sum_{l<l_0} A_l(\xi) \widehat{f}(\xi)e^{2\pi i x\cdot \xi} d\xi \Big\|_{q,\infty} \lesssim \|f\|_{p,1}    \int_0^\infty
\frac{ |b|\rho  }{(\rho^2-|a|)^2+b^2} d\rho.\end{equation}
As before this gives the desired uniform bound by \eqref{integral}.

Therefore, combining \eqref{sumcl}, \eqref{sumbl}, and \eqref{sumal}, we get the uniform bound \eqref{ab} for $(1/p,1/q)=B$.\\

\noindent\textit{Proof of \eqref{ab} for $(1/p,1/q)=C$.}
We  distinguish  the cases $|b|< |a|$ and $|a|\le |b|$.

The case $|b|<|a|$  can be treated similarly by following the lines of argument for the former case $(1/p,1/q)=B$. We use the decomposition \eqref{decompABC} and, then,  the bounds for the multipliers given by $\sum_{l<l_0}A_l$ and $\sum_{l\ge l_0}B_l$ follow  from the same argument for the case $(1/p,1/q)=B$. So, we omit the detail.   However,  for the part given by $\sum_{l\ge l_0}   C_l$  we have
\begin{equation*}\Big\| \finv{\sum_{l\ge l_0}   C_l \widehat f\,\,}\Big\|_{q,\infty} \lesssim |a|^{-\frac{1}{2}(\frac{1}{p}-\frac{1}{q})}\|f\|_{p,1}\end{equation*}
from Proposition \ref{global} (\eqref{psi1}) and Lemma \ref{interpol}. But, since   $|b|<|a|$ ($1/\sqrt2 \le|a|\le1$), we get the uniform bound  \eqref{sumcl}  for $(1/p,1/q)=C$.  Combining all these estimates, we get  \eqref{ab}  for $(1/p,1/q)=C$ when $|b|<|a|$.

For the case $|a|\le |b|$ we don't need the decomposition \eqref{decompABC}. The multiplier operator can be handled easily by making use of Proposition \ref{restriction} and the generalized polar coordinates.
 Similarly as before,   using the polar coordinates, Minkowski's inequality, and   Proposition \ref{restriction},
\begin{align*}
 \Big\|\F^{-1}\Big( \frac{(Q+a)\wh{f}}{(Q+a)^2+b^2} \Big) \Big\|_{q,\infty}
&
\lesssim  \|f\|_{p,1}    \sum_{\pm} \int_0^\infty
\frac{ |a\pm \rho^2| \rho\rho^{\frac dp-\frac dq-2}  }{| a\pm \rho^2|^2+b^2} d\rho
\\
&
\lesssim  \|f\|_{p,1}    \int_0^\infty
\frac{ |\rho-|a|| \rho^{-\frac{1}{d+1}} }{|  \rho-|a||^2+b^2} d\rho\,.
\end{align*}
Since $|a|\le 1/\sqrt2 \le |b| $, by splitting the integral $\int_0^\infty=\int_0^1+\int_1^\infty$ it is easy to see the last integral
is uniformly bounded if $a^2+b^2=1$ and   $|a|\le |b|$.  Hence,  we get \eqref{ab}  for $(1/p,1/q)=C$ when $|a|\le |b|$. This completes the proof of Theorem \ref{uniSob1}.

\section{Application to unique continuation problem for non-elliptic equation operators}

As an immediate consequence of  the non-elliptic uniform Sobolev inequality \eqref{Sob} in Theorem \ref{mainthm},
we  have  a type of Carleman  estimates \eqref{cman}. As their applications, one also obtains results on unique continuation. For the elliptic case, although only the dual case ($1/p+1/q=1$, $1/p-1/q=2/d$) is explicitly stated  in \cite{KRS} (pp. 342--346), the corresponding statements for any $p, q$ are true as long as  the  uniform Sobolev estimate (\cite{KRS})  holds. Likewise, the enlarged range of $p,q$ for which  the  uniform Sobolev inequalities  for
non-elliptic operators  hold extends the class of functions for which unique continuation holds.
What follows  can be proved by routine adaptation of the argument in \cite{KRS} once we have the uniform Sobolev inequality \eqref{Sob}. So, we state our results without giving proofs.

\begin{cor}\label{Carleman}
Let $d\ge 3$ and let $P(D)= Q(D)+\sum_{j=1}^d a_j D_j + b$ with the non-elliptic principal symbol $Q$ as in \eqref{quad}, where $a_j, b \in \C$. Suppose $p$, $q$ satisfy $1/p-1/q=2/d$ and  $2d(d-1)/(d^2+2d-4)<p<2(d-1)/d$,  then we have
\begin{equation}\label{cman}
\norm{e^{tv\cdot x}u}_{L^q(\R^d)} \le C \|e^{tv\cdot x}P(D)u\|_{L^p(\R^d)}, \quad u\in \cS(\mathbb{R}^d),
\end{equation}
where the constant $C$ is independent of $t\in\R$, $v\in\R^d$, and $a_j, b$.
\end{cor}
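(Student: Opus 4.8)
The plan is to deduce Corollary \ref{Carleman} from the uniform Sobolev inequality \eqref{Sob} of Theorem \ref{mainthm} by a standard conjugation argument. The key observation is that conjugating the differential operator $P(D)$ by the exponential weight $e^{tv\cdot x}$ produces another operator of exactly the same type, with only the first-order and zeroth-order coefficients changed; since the constant $C$ in Theorem \ref{mainthm} is independent of those coefficients, the weighted estimate follows with a uniform constant.

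First I would compute the conjugated operator. Writing $D_j = \frac{1}{2\pi i}\partial_{x_j}$, one has the operator identity
\[
e^{tv\cdot x}\, D_j\, (e^{-tv\cdot x}\, \cdot\,) = D_j + \frac{t v_j}{2\pi i},
\]
so that $e^{tv\cdot x} D_j D_m (e^{-tv\cdot x}\,\cdot\,) = (D_j + \frac{tv_j}{2\pi i})(D_m + \frac{tv_m}{2\pi i})$. Applying this to each monomial in $P(D) = Q(D) + \sum_j a_j D_j + b$ and collecting terms, one finds that
\[
e^{tv\cdot x}\, P(D)\, (e^{-tv\cdot x}\,\cdot\,) = Q(D) + \sum_{j=1}^d \widetilde a_j D_j + \widetilde b,
\]
where $\widetilde a_j$ and $\widetilde b$ are new complex numbers depending on $t, v, a_j, b$ and on the quadratic form $Q$, but the principal part $Q(D)$ is unchanged. (The point is simply that $Q(\xi + c)$ is a polynomial in $\xi$ of degree two whose leading part is again $Q(\xi)$, for any constant vector $c\in\mathbb C^d$.) Hence $\widetilde P(D) := e^{tv\cdot x} P(D) e^{-tv\cdot x}$ is again a non-elliptic second order differential operator with constant coefficients and the same principal symbol $Q$.

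Next I would apply Theorem \ref{mainthm} to $\widetilde P(D)$. For $p, q$ with $1/p - 1/q = 2/d$ and $\frac{2d(d-1)}{d^2+2d-4} < p < \frac{2(d-1)}{d}$ — which is exactly condition \eqref{scaling} together with \eqref{cond2} — Theorem \ref{mainthm} gives a constant $C = C(d,k,p,q)$, independent of the lower-order coefficients $\widetilde a_j, \widetilde b$, such that
\[
\|w\|_{L^q(\mathbb R^d)} \le C\, \|\widetilde P(D) w\|_{L^p(\mathbb R^d)}, \qquad w \in W^{2,p}(\mathbb R^d).
\]
Taking $w = e^{tv\cdot x} u$ for $u \in \mathcal S(\mathbb R^d)$ (so that $w$ lies in $W^{2,p}$, the weight being smooth), and noting $\widetilde P(D) w = e^{tv\cdot x} P(D) u$, one obtains \eqref{cman} with the same $C$; in particular $C$ does not depend on $t$, $v$, or the original $a_j, b$, since the bound in Theorem \ref{mainthm} is uniform over all admissible lower-order terms and the translated form $Q$ has the same $(d,k)$. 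This completes the proof.

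The only mild subtlety — and the main point to check carefully rather than a genuine obstacle — is the justification that $w = e^{tv\cdot x} u$ is an admissible test function: $u$ is Schwartz but $w$ is not, since the exponential grows. One resolves this by observing that $w \in W^{2,p}(\mathbb R^d)$ for every $p$ (as $e^{tv\cdot x}$ and its derivatives are bounded on the support of any Schwartz tail after multiplication — more precisely, $e^{tv\cdot x} u$ and its derivatives up to order two are Schwartz times polynomial-exponential, hence in $L^p$), and invoking the density form of \eqref{Sob} on $W^{2,p}$; alternatively one runs the argument with a cutoff and passes to the limit, exactly as in \cite{KRS}. Since the excerpt explicitly says this is "proved by routine adaptation of the argument in \cite{KRS}," no further detail is needed here.
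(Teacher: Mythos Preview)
Your proof is correct and is exactly the standard conjugation argument the paper has in mind when it says the corollary ``can be proved by routine adaptation of the argument in \cite{KRS}''; the paper itself gives no proof beyond that remark. The only slip is a harmless sign in the conjugation identity (one gets $D_j-\tfrac{tv_j}{2\pi i}$ rather than $+$), which is immaterial since the argument only uses that the principal part $Q(D)$ is unchanged.
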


Consequently, this extends the class of functions  for which the global and local unique continuation properties for the differential inequality $|P(D)u|\le |Vu|$ hold.

\begin{cor}\label{globaluc}
Let $d\ge3$ and let $p$ and $P(D)$ be as in Corollary \ref{Carleman}. Suppose $V\in L^{d/2}(\mathbb{R}^d)$. If the support of $u\in W^{2,p}(\mathbb{R}^d)$ is contained in a half space and $u$ satisfies $|P(D)u|\le|Vu|$ almost everywhere, then $u=0$ on the whole $\R^d$.
\end{cor}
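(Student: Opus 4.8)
\noindent\emph{Proof proposal.} The plan is to derive Corollary~\ref{globaluc} from the Carleman estimate \eqref{cman} of Corollary~\ref{Carleman} by the propagation-of-vanishing scheme of \cite{KRS}. Since $P(D)$ has constant coefficients, after a rotation we may take the half-space to be $\{x\in\R^d:x_1\ge0\}$, so that $\supp u\subset\{x_1\ge0\}$. The algebraic input that makes the Carleman estimate usable is the hypothesis $|P(D)u|\le|Vu|$ with $V\in L^{d/2}$ together with the scaling relation $1/p-1/q=2/d$: by H\"older's inequality (the exponents satisfying $1/p=1/q+2/d$ precisely because of \eqref{scaling}) one has, for any weight $w$,
\begin{equation*}
\norm{w\,P(D)u}_{p}\le\norm{w\,Vu}_{p}\le\norm{V}_{L^{d/2}}\,\norm{w\,u}_{q},
\end{equation*}
so the right-hand side of \eqref{cman} is controlled by $\norm{V}_{L^{d/2}}$ times its left-hand side.

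To run the argument one localizes: pick a cutoff $\chi\in C_c^\infty(\R^d)$ supported near a point of $\{x_1=0\}$, apply \eqref{cman} to $\chi u$ with a weight $e^{tv\cdot x}$ (which keeps the left-hand side finite, $\chi$ being compactly supported), and expand $P(D)(\chi u)=\chi\,P(D)u+R_\chi u$ with $R_\chi u$ a first order commutator supported in the annular region where $\nabla\chi\neq0$. Splitting $V=V_1+V_2$ with $\norm{V_1}_{L^{d/2}}$ small enough to be absorbed into the left-hand side via the displayed inequality, and $V_2\in L^\infty$ supported in a fixed ball, one is left with the task of choosing the family of weights $\{e^{tv\cdot x}\}_{t>0}$ --- here one crucially uses that the constant in \eqref{cman} is independent of $v\in\R^d$ and $t$ --- so that, on letting $t\to\infty$, the exponential factors defeat both the commutator term and the bounded term coming from $V_2$, forcing $u$ to vanish on a set strictly larger than $\{x_1<0\}$. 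Because the coefficients are constant, translating and iterating this incremental step propagates the vanishing across all of $\R^d$, giving $u\equiv0$.

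The main obstacle is precisely this last, geometric, step: arranging the weight direction $v$, the cutoff, and the radii so that the exponential gain on the commutator annulus beats the losses from the $V_2$-term (and, for a characteristic half-space, genuinely exploiting the uniformity of \eqref{cman} over nearby non-characteristic directions). All of this, together with the reduction of \eqref{cman} from $u\in\cS(\R^d)$ to $u\in W^{2,p}(\R^d)$ by mollification, is carried out exactly as in \cite[pp.~342--346]{KRS}, which is why we only sketch it here.
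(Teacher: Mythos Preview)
Your proposal is correct and matches the paper's approach: the paper gives no proof of Corollary~\ref{globaluc} and simply states that it follows by routine adaptation of the argument in \cite[pp.~342--346]{KRS} once the Carleman estimate \eqref{cman} is in hand, which is precisely the scheme you sketch. One small remark: the initial rotation is unnecessary (and would alter $Q$), since \eqref{cman} already holds uniformly in the direction $v$, so you can take $v$ to be the inward normal of the given half-space directly.
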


\begin{prop}
Let $d\ge3$, $2d(d-1)/(d^2+2d-4)<p<2(d-1)/d$,  and let $P(D)= \bigboxvoid +\sum_{j=1}^d a_jD_j+b$, where $\bigboxvoid=D^2_1 -\sum^d_{j=2}D^2_j$ is the wave operator and  $a_j,b\in\C$. Suppose $\mathcal{C}_v$ is an open convex cone with vertex $v\in \mathbb R^d$ such that every characteristic hyperplane with respect to $P(D)$ through $v$ intersects $\overline{\mathcal{C}_v}\setminus \{v\}$. If $u\in W_{loc}^{2,p}(\mathcal{C}_v)$ and $V\in L_{loc}^{d/2}(\mathcal{C}_v)$ satisfy the differential inequality
$
|P(D)u|\le|Vu|
$
in $\mathcal{C}_v$,  then $u=0$ in the whole $\mathcal{C}_v$ whenever $u$ vanishes outside a bounded subset of $\mathcal{C}_v$.
\end{prop}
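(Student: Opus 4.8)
The plan is to deduce this local unique continuation statement for the wave operator in a convex cone from the Carleman inequality \eqref{cman} of Corollary \ref{Carleman}, following the scheme of \cite{KRS}. The key point is that \eqref{cman} holds with a constant uniform in the weight parameter $t$ and the direction $v$; this is exactly what makes a convexity argument across a family of hyperplanes work. First I would normalize so that the vertex is at the origin, $v=0$, and translate/rotate so that the cone $\mathcal C_v$ opens in a fixed direction; the hypothesis that every characteristic hyperplane of $P(D)$ through $v$ meets $\overline{\mathcal C_v}\setminus\{v\}$ is preserved (it is invariant under the relevant affine changes, since the characteristic variety of $\bigboxvoid$ is the light cone). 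Since $u$ vanishes outside a bounded subset of $\mathcal C_v$, after cutting off we may assume $u$ is compactly supported in $\mathbb R^d$ and that $|P(D)u|\le |Vu|$ holds on a neighborhood of $\supp u$ inside $\mathcal C_v$, with $V\in L^{d/2}$ on that neighborhood.

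The main step is the standard Carleman-plus-Hölder iteration. Apply \eqref{cman} to $u$ with weight $e^{t e\cdot x}$ for a suitable unit vector $e$, obtaining
\[
\|e^{t e\cdot x}u\|_{L^q}\le C\|e^{t e\cdot x}P(D)u\|_{L^p}\le C\|e^{t e\cdot x}Vu\|_{L^p}.
\]
By Hölder's inequality with the exponent relation $\tfrac1p=\tfrac1q+\tfrac2d$ (which is exactly \eqref{scaling}, and is why the range $2d(d-1)/(d^2+2d-4)<p<2(d-1)/d$ is the natural one), the right side is bounded by $C\|V\|_{L^{d/2}(E)}\,\|e^{t e\cdot x}u\|_{L^q}$, where $E$ is the region where $Vu\ne 0$. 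Choosing a first cutoff so that $\|V\|_{L^{d/2}(E)}$ is small enough that $C\|V\|_{L^{d/2}(E)}\le \tfrac12$, the $L^q$ term is absorbed, but only the contribution of the weight $e^{te\cdot x}$ over $\supp u$ survives on the left while on the right we are left with the region where $u$ is not yet known to vanish. Letting $t\to+\infty$ with $e$ chosen so that $e\cdot x$ is larger on the part of $\supp u$ one wants to kill than on the complementary region then forces $u$ to vanish on a strictly larger set — concretely, on one more half-space $\{e\cdot x> c\}$. Here the hypothesis on characteristic hyperplanes guarantees that the directions $e$ one is allowed to use (those for which the relevant half-space estimate is non-degenerate, i.e. $e$ is not characteristic, or more precisely the hyperplane $\{e\cdot x=c\}$ is non-characteristic) sweep out all of $\mathcal C_v$ as $c$ varies, so the vanishing set can be enlarged step by step until it exhausts $\mathcal C_v$.

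I expect the main obstacle to be purely geometric bookkeeping rather than analytic: one must verify that the family of non-characteristic half-spaces available for the Carleman argument, pivoting around the vertex $v$ and constrained by the cone $\mathcal C_v$, actually propagates the zero set all the way through $\mathcal C_v$ — this is precisely where the condition ``every characteristic hyperplane through $v$ intersects $\overline{\mathcal C_v}\setminus\{v\}$'' is used, and making the covering/continuity argument rigorous (a connectedness argument on the set of $c$ for which $u\equiv 0$ on $\{e\cdot x>c\}\cap\mathcal C_v$) requires care with the Lebesgue exponents in the cutoffs so that the smallness of $\|V\|_{L^{d/2}}$ on shrinking regions is maintained uniformly. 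The analytic input — the uniform Carleman estimate \eqref{cman} — is already in hand from Corollary \ref{Carleman}, so once the geometry is set up the iteration is routine; for this reason, as stated in the text, the proof is a direct adaptation of \cite{KRS} and the details are omitted.
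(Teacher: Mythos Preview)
Your proposal is correct and follows exactly the approach the paper indicates: the paper does not give a proof of this proposition but states that it ``can be proved by routine adaptation of the argument in \cite{KRS} once we have the uniform Sobolev inequality,'' and your outline---Carleman estimate \eqref{cman}, H\"older with $1/p-1/q=2/d$, absorption via smallness of $\|V\|_{L^{d/2}}$, and propagation of vanishing across non-characteristic hyperplanes using the geometric hypothesis on $\mathcal C_v$---is precisely that adaptation. There is nothing to add.
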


\medskip

\subsection*{Acknowledgement}  This work was supported by NRF of  Republic of Korea (grant No. 2015R1A2A2A05000956). The authors would like to thank J.-G. Bak for 
communication regarding earlier results. 

 {\bibliographystyle{plain}

\end{document}